\def \N {{\mathbb N}}
\def\le{\leqslant}
\def\ge{\geqslant}
\theoremstyle{plain}
\newtheorem{theorem}{Theorem}
\newtheorem{lemma}{Lemma}[section]
\newtheorem{corollary}[theorem]{Corollary}
\newtheorem*{problem*}{Problem}
\theoremstyle{remark}
\newtheorem{remark}[theorem]{Remark}
\theoremstyle{definition}
\numberwithin{equation}{section}
\begin{document}

\title[Egyptian Fractions with Restrictions]{Egyptian
Fractions with Restrictions}
\date{\today}
\author{Yong-Gao Chen, Christian Elsholtz and Li-Li Jiang}

\address{School of Mathematical Sciences and Institute of Mathematics, Nanjing
Normal University\\
 Nanjing 210046, P. R. CHINA}
\email{ ygchen@njnu.edu.cn}

 \address{Institut f\"ur Mathematik A, Steyrergasse 30/II,
A-8010 Graz, Austria } \email{ elsholtz@math.tugraz.at}

\address{ School of Mathematical Sciences, Nanjing
Normal University\\
 Nanjing 210046, P. R. CHINA }

\begin{abstract} Let $T_o(k)$ denote
the number of solutions of $\sum_{i=1}^k\frac 1{x_i}=1$ in odd
numbers $1<x_1<x_2<\cdots <x_k$. It is clear that $T_o(2k)=0$. For
distinct primes $p_1, p_2,\ldots , p_t$, let $S(p_1, p_2,\ldots ,
p_t)=\{ p_1^{\alpha_1}\cdots p_t^{\alpha_t}\mid \alpha_i\in
\mathbb{N}_0, i=1,2,\ldots , t \}$. Let $T_k(p_1,\ldots , p_t)$ be
the number of solutions $\sum_{i=1}^{k}\frac 1{x_i}=1$ with
$1<x_1<x_2<\cdots <x_{k}$ and $x_i\in S(p_1, p_2,\ldots , p_t)$.
It is clear that if $T_k(p_1,\ldots , p_t)\not= 0$ for some $k$,
then the inverse sum of all elements $s_j>1$ in $S(p_1, p_2,\ldots, p_t)$
is more than 1.

In this paper we study $T_o(k)$ and $T_k(p_1,\ldots , p_t)$. Three
of our results are:\\
1) $T_o(2k+1)\ge (\sqrt 2)^{(k+1)(k-4)}$ for
all $k\ge 4$;\\
 2) if the inverse sum of all elements $s_j>1$ in
$S(p_1, p_2,\ldots , p_t)$ is more than 1, then $T_k(p_1,\ldots ,
p_t)\not= 0$ for infinitely many  $k$ and the set of these $k$ is
the union of finitely many arithmetic progressions;\\
3) there exists two
constants $k_0=k_0(p_1,\ldots , p_t)>1$ and $c=c(p_1,\ldots ,
p_t)>1$ such that for any $k>k_0$ we have either $T_k(p_1,\ldots ,
p_t)= 0$ or $T_k(p_1,\ldots , p_t)>c^k$.
\end{abstract}

\keywords{ Egyptian fractions; the number of solutions;
Diophantine equations}

\thanks{ 2010 Mathematics Subject Classification: 11D68, 11D72}
\thanks{The work of Y.-G. Chen is supported by the National Natural Science Foundation of China,
Grant Nos 11071121 and 10771103.}

\maketitle

\section{Introduction}

Egyptian fractions or unit fractions are extensively studied (see
\cite{Barbeau}, \cite{Croot}, \cite[D11]{Guy}, \cite{Martin}).
Some studies are concerned with the question which fractions can be written as
a sum of $k$ unit fractions, others restrict the denominators, still others
count the number of solutions. In particular, solutions of
 $1=\sum_{i=1}^k\frac{1}{x_i}$ have been extensively studied.
Sierpi\'{n}ski \cite{Sierpinski} noted that there is a solution
with distinct odd integers, and Breusch \cite{StarkeandBreusch}
and Stewart \cite{Stewart} independently proved that each fraction
$\frac{a}{b}$ with odd denominator can be written as a finite sum
of distinct unit fractions. More recently Shiu \cite{Shiu} and
 Burshtein \cite{Burshtein} proved that the equation
$\sum_{i=1}^9\frac 1{x_i}=1$ in distinct odd numbers has only
those five solutions that can be easily found with a computer.
 Motivated by this, let $T_o(k)$ denote the
number of solutions of $\sum_{i=1}^k\frac 1{x_i}=1$ in odd numbers
$1<x_1<x_2<\cdots <x_k$. It is easy to see that for all even
values of $k$, the equation $\sum_{i=1}^k\frac 1{x_i}=1$ does not have
solutions in odd numbers and then $T_o(k)=0$.  One natural problem
is: how large can $T_o(k)$ be, for odd $k$? In this paper we present a
 lower bound which grows faster than exponentially.

The literature contains many results either stating that there are
solutions of $\sum_{i=1}^k\frac 1{x_i}=1$ of a special type, which
is an indication that the equation has many solutions, or stating
that certain types of solutions cannot exist, or bounding the number
of solutions. For example, Martin \cite{Martin} showed that
$\sum_{i=1}^k\frac 1{x_i}=1$ has solutions, in which a dense set of
the possible denominators occur. Croot \cite{Croot} showed that for
any $r$-colouring of the integers there is a monochromatic solution
of $\sum_{i=1}^k\frac 1{x_i}=1$. This is some measure of saying the
equation has many solutions, and these are closely interlinked, as
otherwise one could construct a bad colouring.

In 2007 Z.W.~Sun \cite{Sun} conjectured the following
strengthening of this: If $A \subset \mathbb N$ is a set of
positive upper asymptotic density, then there is a finite subset
$\{x_1, \ldots, x_k\}$ of $A$ with $\sum_{i=1}^k \frac{1}{x_i}=1$.

In this paper we examine for which set of primes, there is a
solution of the diophantine equation $\sum_{i=1}^k
\frac{1}{x_i}=1$, of which all denominators consist of the given
prime factors only, and how many of such solutions there are. Let
us introduce the following notation. Let $\mathbb{N}_0$ be the set
of all nonnegative integers.  For distinct primes $p_1, p_2,\ldots
, p_t$, let
$$S(p_1, p_2,\ldots , p_t)=\{ p_1^{\alpha_1}\cdots p_t^{\alpha_t}\mid
\alpha_i\in \mathbb{N}_0,  i=1,2,\ldots , t \}$$ and let
$T_k(p_1,\ldots , p_t)$ be the number of solutions of
$\sum_{i=1}^{k}\frac 1{x_i}=1$ with $1<x_1<x_2<\cdots <x_{k}$ and
$x_i\in S(p_1, p_2,\ldots , p_t)\ (1\le i\le k)$.

As a very special case Burshtein \cite{Burshtein1} proved that the
equation $\sum_{i=1}^{11}\frac 1{x_i}=1$   with $1<x_1<x_2<\cdots
<x_{11}$ and $x_i\in \{ 3^{\alpha} 5^{\beta} 7^{\gamma} : \alpha,
\beta,\gamma \in \N_0 \},
 (1\le i\le 11)$  has exactly
17 solutions, in other words $T_{11}(3,5,7)=17$.

In this paper we establish a necessary and sufficient condition on the set of
primes for a solution to exist, and present upper and lower bounds of
exponential type. (For details see the next section).

There is a closely related problem, where not all denominators are
necessarily distinct. Let us also review results that are known
for counting such solutions. Let $K(k)$ denote the number of
solutions of $\sum_{i=1}^k\frac 1{x_i}=1$ in integers $1\le x_1\le
x_2\le \cdots \le x_k$. Erd\H os, Graham and Straus (unpublished
but see \cite[p.32]{erdos}) proved that
$$e^{k^{2-\varepsilon }} < K(k)< c_0^{2^k},$$
where $c_0=1.264085\cdots $. S\'andor \cite{Sandor} improved this
to
$$e^{c\frac{k^3}{\log k}}\le K(k)\le c_0^{(1+\varepsilon)2^{k-1}},
\quad k\ge k_0.$$ The upper bound was recently improved by
Browning and Elsholtz \cite{BrowningandElsholtz} to
$$K(k)\le c_0^{(\frac{5}{48}+\varepsilon)2^k}, \quad k\ge k_0.$$

Finally, let us remark that the problem of representing 1 as a sum
of unit fractions with restricted prime factors in the
denominators is closely related to so called ``pseudoperfect''
numbers. A number is called pseudoperfect if it is the sum of some
of its divisors. For example, Sierpi\'{n}ski
\cite{Sierpinski:1965} observed that
\[ 945=315+189 +135+105+63+45+35+27+15+9+7\]
which is equivalent to a decomposition already stated by
Sierpi\'nski in \cite{Sierpinski}
\[1=\frac{1}{3}+\frac{1}{5}+ \frac{1}{7}
+ \frac{1}{9}+\frac{1}{15}+ \frac{1}{21}
+ \frac{1}{27}+\frac{1}{35}+ \frac{1}{63}
+ \frac{1}{105}+\frac{1}{135}. \]
Observe that the denominators consist of the prime factors 3, 5 and 7 only.

\section{Statement of results}

In this paper we prove the following results.

\begin{theorem}\label{thmto1} For $k\ge 4$ we have

$$T_o(2k+1)\ge   (\sqrt 2)^{(k+1)(k-4)}.$$ \end{theorem}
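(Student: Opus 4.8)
The plan is to prove the bound by induction on $k$, the inductive step being a recursive construction: from any solution of $\sum_{i=1}^{2k-1}\tfrac1{x_i}=1$ in distinct odd integers $x_i>1$ I will manufacture at least $2^{k-2}$ solutions of $\sum_{i=1}^{2k+1}\tfrac1{x_i}=1$ in distinct odd integers $>1$, in such a way that distinct parents and distinct choices produce distinct offspring. Granting this and the base case $T_o(9)\ge 1$ (exhibit one explicit nine-term solution in distinct odd integers, e.g. any of those found by Shiu and Burshtein), the per-step exponents telescope: since $\sum_{j=5}^{k}(j-2)=\tfrac{(k+1)(k-4)}{2}$, one gets $T_o(2k+1)\ge T_o(9)\cdot 2^{(k+1)(k-4)/2}\ge(\sqrt2)^{(k+1)(k-4)}$ for all $k\ge 4$, the case $k=4$ being the base case itself.

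The basic move is an \emph{oddness-preserving splitting}: for every odd $m$ one has
$$\frac{1}{3m}=\frac{1}{5m}+\frac{1}{9m}+\frac{1}{45m},\qquad \frac{1}{5m}=\frac{1}{7m}+\frac{1}{21m}+\frac{1}{105m},$$
together with further relations $\tfrac1n=\tfrac1a+\tfrac1b+\tfrac1c$ with $a,b,c$ odd whenever the odd number $n$ is sufficiently divisible. Replacing one term $\tfrac1{x_i}$ of a solution by the three terms on the right of such an identity keeps the sum equal to $1$ and raises the number of terms by exactly $2$; the three new denominators are automatically distinct from one another, and collisions with the surviving denominators form only a finite obstruction, which one removes by scaling the identity through by a large odd factor (equivalently, by applying it only to a term whose denominator is large enough).

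To turn this into a \emph{multiplicative} gain of $2^{k-2}$ per step rather than a polynomial one, I would carry along a richness invariant: strengthen the induction to assert that at least $2^{(k+1)(k-4)/2}$ of the counted $(2k+1)$-term odd solutions possess a distinguished denominator divisible by $3^{r_k}$ for a suitable $r_k\to\infty$. Such a term $\tfrac1n$ with $3^{r_k}\mid n$ can be split into three odd unit fractions in many inequivalent ways — peeling the factor $3$ off at different points, chaining the auxiliary identities above, and (as the mechanism most likely to actually produce independent binary choices) combining a split at one location with a compensating \emph{merge} of a matching triple $\{5m,9m,45m\}\mapsto\{3m\}$ elsewhere, so that several such moves can be toggled on and off independently while the net change in the term count stays $+2$. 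In the displayed identity $9m$ and $45m$ both have strictly larger $3$-adic valuation than $3m$, so the richness invariant is reproduced with $r_{k+1}=r_k+1$; and recording which denominator was split (and how) recovers the parent, securing injectivity and hence the multiplicativity of the counts.

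I expect the genuine difficulty to lie entirely in this amplification step: simultaneously (i) exhibiting $\ge 2^{k-2}$ distinct admissible modifications of a rich solution, (ii) checking in each case that no denominator collision silently merges two terms (which would spoil the term count, and the oddness may then need repair), and (iii) verifying that different parents and different choices never yield the same child, so that the per-step factors really multiply through the induction. The splitting identities, the telescoping of the exponent, and the base case are routine by comparison; the combinatorial heart of the proof is organising enough independent, collision-free, reversible moves to feed the recursion.
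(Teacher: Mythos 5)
Your skeleton is exactly the paper's: an injective amplification step producing at least $2^{k-2}$ children of each counted $(2k-1)$-term solution, the telescoping $\sum_{j=5}^{k}(j-2)=\tfrac{(k+1)(k-4)}{2}$, and a nine-term base case from Shiu/Burshtein. But the combinatorial heart — actually exhibiting $2^{k-2}$ distinct, collision-free ways to split one term — is precisely what you leave open, and the mechanisms you sketch would not deliver it. Your richness invariant (a distinguished denominator divisible by $3^{r_k}$ with $r_{k+1}=r_k+1$) only supplies on the order of $r_k=O(k)$ "peel off a $3$" splittings per step, so it yields at most $2^{O(k\log k)}$ solutions, far short of $(\sqrt2)^{(k+1)(k-4)}=2^{\Omega(k^2)}$. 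The split-and-merge toggling you hope will create independent binary choices is unsupported by the invariant you carry: nothing guarantees the solution contains triples of the form $\{5m,9m,45m\}$ to merge, a merge may collide $3m$ with an existing denominator, and each (split, merge) pair is net $0$ in the term count, so toggling such pairs cannot keep the change at $+2$ without a separate unmatched split whose interaction with the toggles is never controlled. So as it stands there is a genuine gap, not a routine verification.

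The paper closes this gap with a divisor-counting lemma rather than a $3$-adic one. For odd $n$ and each divisor $k>1$ of $n$ with $k\equiv 1\pmod 4$, one has
$$\frac 1n=\frac 1{n+2}+\frac{2k}{n(n+2)(k+1)}+\frac{2}{n(n+2)(k+1)},$$
i.e.\ $u=n+2$, $v=\tfrac{1}{2k}n(n+2)(k+1)$, $w=\tfrac12 n(n+2)(k+1)$ are odd with $n<u<v<w$; distinct divisors give distinct $(u,v,w)$, and the number of admissible divisors is at least $d_1(n)-1\ge \tfrac12 d(n)-1$ because $r_2(n)=4(d_1(n)-d_3(n))\ge 0$. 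Moreover $d(w)\ge d(n)d(n+2)+1\ge 2d(n)+1$, so the right invariant to carry is $d(x_{\text{last}})>2^k$: it doubles at each step and hence funds $\ge 2^{k-2}$ splittings of the largest term, while the fact that $u,v,w$ all exceed $x_{\text{last}}$ settles both the collision and the injectivity worries you list. The base case then needs not just $T_o(9)\ge1$ but a nine-term solution satisfying the invariant, e.g.\ Shiu's with $x_9=10395$ and $d(10395)=32>2^4$. If you replace your $3^{r_k}$-divisibility invariant by this divisor-count invariant and your splitting identities by the parametric family above, your induction goes through verbatim.
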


Let $p_1, p_2,\ldots , p_t$ be distinct primes. Define
$$K(p_1, p_2,\ldots , p_t)=\{ k : T_{k} (p_1, p_2,\ldots ,
p_t)\ge 1\} .$$ By Lemma \ref{lema},
if $k,l\in K(p_1, p_2,\ldots , p_t)$, then $k+l-1\in K(p_1, p_2,\ldots , p_t)$.
Observe that with $l\in K(p_1, p_2,\ldots , p_t)$,
the infinite arithmetic progression $a(l-1)+1$ is contained in
$K(p_1, p_2,\ldots , p_t)$.

\begin{theorem}\label{thma} Let $p_1, p_2,\ldots , p_t$ be distinct
primes. Then

(a) $K(p_1, p_2,\ldots , p_t)$ is a  union of finitely many
arithmetic progressions;

(b) there exist two constants $k_0=k_0(p_1, p_2,\ldots , p_t)$ and
$c_1=c_1(p_1, p_2,\ldots , p_t)>1$ such that for all $k>k_0$  with
$k\in K(p_1, p_2,\ldots , p_t)$ we have
$$c_1^k\le T_{k}(p_1, p_2,\ldots , p_t)\le \sqrt
2^{tk^2(1+o_k(1))}.$$\end{theorem}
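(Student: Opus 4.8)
The plan is to treat the two parts of Theorem \ref{thma} more or less in tandem, since both rest on the semigroup-type structure of $K = K(p_1,\dots,p_t)$ recorded in Lemma \ref{lema}, namely that $k,l\in K$ implies $k+l-1\in K$. Equivalently, shifting by one, the set $K' = \{k-1 : k\in K\}$ is closed under addition, i.e.\ $K'$ is a numerical sub-semigroup of $\mathbb{N}_0$. First I would show $K'$ is nonempty exactly under the hypothesis that the inverse sum of the elements $s_j>1$ of $S(p_1,\dots,p_t)$ exceeds $1$ (this is the content feeding into Theorem \ref{thma}); granting nonemptiness, let $d = \gcd(K')$. A classical fact about numerical semigroups (the Chicken McNugget / Frobenius phenomenon) is that a semigroup generated by integers with gcd $d$ contains \emph{every} sufficiently large multiple of $d$; applied here, there is a threshold beyond which $K'$ coincides with $d\mathbb{N}$, so $K$ itself is, above some point, a single arithmetic progression with common difference $d$, together with finitely many sporadic smaller values. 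That gives part (a): $K$ is a finite union of arithmetic progressions.

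For part (b), the lower bound $T_k \ge c_1^k$ is the substantive half. The mechanism is \emph{splicing solutions together}: given a solution $\sum_{i=1}^{a}\frac1{x_i}=1$ and a solution $\sum_{i=1}^{b}\frac1{y_i}=1$ with all denominators in $S(p_1,\dots,p_t)$, one can replace a chosen term $\frac1{x_j}$ in the first by $\frac1{x_j}\sum_i\frac1{y_i} = \sum_i \frac1{x_j y_i}$, producing a solution with $a+b-1$ terms (this is exactly the identity behind Lemma \ref{lema}); since $x_j y_i \in S(p_1,\dots,p_t)$, the type is preserved. The subtlety is to keep the denominators \emph{distinct} — one must choose the rescaling index $j$ so the new denominators $x_j y_i$ avoid the untouched $x_i$'s, which can be arranged by taking $x_j$ large (say the largest denominator) and, if necessary, first passing to solutions whose denominators are all larger than some bound via a preliminary split of $\frac1{x}=\frac1{2x}+\frac1{3x}+\frac1{6x}$-type moves adapted to the allowed primes. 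Fixing one ``seed'' solution with $m$ terms and iterating the splice, from each solution counted by $T_{k}$ I can produce many distinct solutions counted by $T_{k+m-1}$ — the number of distinct outputs grows by a fixed multiplicative factor $>1$ at each step (because there are several legitimate choices of which term to expand, or because one can expand into disjoint ranges), which after $O(k)$ steps yields $T_k \ge c_1^{k}$ for $k\in K$ large. The main obstacle I anticipate is precisely the bookkeeping of distinctness across iterated splices and making the branching factor provably bounded below by a constant exceeding $1$; this is where care is needed rather than in any deep new idea.

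The upper bound $T_k \le \sqrt2^{\,tk^2(1+o_k(1))}$ is comparatively soft. A solution of $\sum_{i=1}^k\frac1{x_i}=1$ with $x_i\in S(p_1,\dots,p_t)$ and $x_1<\dots<x_k$ is determined by the multiset of exponent vectors $(\alpha_1,\dots,\alpha_t)\in\mathbb{N}_0^t$. Each $x_i$ is at most some bound $B=B(k)$: indeed the standard greedy/size argument forces $x_k$ (and hence all $x_i$) to be bounded in terms of $k$ — roughly $x_i \le k \cdot 2^{2^k}$ or a similar iterated-exponential bound coming from the fact that consecutive partial tails $1-\sum_{i\le j}1/x_i$ have denominators that cannot grow too wildly when only $t$ primes are available; in any case $\log x_i \le k^{1+o(1)}$ suffices here. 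Then each exponent $\alpha_{i,r} \le \log_{p_r} x_i \le (\log x_i)/\log 2$, so the number of possible vectors is at most $B^{t(1+o(1))}$ in an appropriate sense, and choosing $k$ distinct ones gives at most $\binom{(\log_2 B)^t \cdot (\text{something})}{k}$; combining the size bound $\log_2 B = O(k)$ (the genuinely relevant regime) with the crude count yields $T_k \le \sqrt2^{\,tk^2(1+o_k(1))}$ after taking logarithms, where the $k^2$ arises as $k$ choices each carrying $t$ exponents each of size $O(k)$, i.e.\ $t k \cdot k$ bits of information and hence at most $2^{tk^2(1+o(1))/2} = \sqrt2^{\,tk^2(1+o(1))}$ solutions. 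I would present the lower-bound construction in full detail and keep the upper bound as a short counting remark.
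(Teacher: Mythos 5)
Your part (a) is fine: the shift $K-1$ is additively closed by Lemma \ref{lema}, and either your gcd/Frobenius argument or the paper's residue-class argument (fix one $m_0\in K$ and split $K$ into classes mod $m_0-1$, each class being an arithmetic progression above its least element) gives the finite union of progressions. The genuine gap is in the lower bound of part (b). Your mechanism produces, from the single solution you know to exist, new solutions by splicing a fixed seed into the largest denominator; but splicing into the largest term is forced precisely by the distinctness problem you yourself point out (expanding a non-maximal $x_j$ can collide with the untouched larger denominators), and with only that one safe move the process is deterministic: since the operation $\ast$ is associative, iterated expansion of one solution by itself yields exactly one solution of each attainable length, i.e.\ branching factor $1$, not $>1$. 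You flag "making the branching factor provably bounded below by a constant exceeding $1$" as the anticipated difficulty, but that is not bookkeeping -- it is the heart of the theorem, and nothing in the proposal supplies it. The paper's resolution is Lemma \ref{lemc}: starting from one solution it manufactures a length $l$ with \emph{two distinct} solutions, and this is nontrivial -- it uses the completeness theorem of Hegyv\'ari/Birch (Lemma \ref{lemb}) to write $q^v$ as a sum of distinct terms $p^{\alpha_i}q^{\beta_i}$ with bounded $\beta_i$, thereby splitting the last denominator in two genuinely different ways of equal length, with Lemma \ref{lemd} certifying the two resulting tuples are distinct. Once some $T_{m_0}\ge 2$ is known, Lemma \ref{lema} pumps it to $T_{(m_0-1)l+k_i}\ge 2^l\ge c_1^k$. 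Without an ingredient of this kind your argument proves only $T_k\ge 1$ along an arithmetic progression.

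A secondary, fixable issue: your upper-bound count as written does not give the stated constant. The uniform bound on denominators is doubly exponential in $k$ (so "$\log_2 B=O(k)$" is false for the largest ones); what saves the day is the index-dependent bound $x_j\le (k-j+1)u_j<k\,2^{2^j}$ (S\'andor), so the $j$-th denominator carries about $tj$ bits and the total is $\sim tk^2/2$ bits, giving $\sqrt2^{\,tk^2(1+o(1))}$; your "$tk\cdot k$ bits" heuristic only yields $2^{tk^2(1+o(1))}$.
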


Let
$$A=S(p_1,\ldots , p_t)\setminus \{ 1\} =\{ a_1<a_2<\cdots \} .$$
 Then
$$\sum_{i=1}^\infty \frac 1{a_i} =\left( 1+\frac 1{p_1}+\frac 1{p_1^2}+\cdots \right) \cdots \left( 1+\frac 1{p_t}+\frac 1{p_t^2}+\cdots
\right)-1=\frac{p_1}{p_1-1}\cdots \frac{p_t}{p_t-1}-1.$$
As we are studying finite sums of unit fractions, and
as the denominator 1 is discarded from consideration
 a necessary condition for
 $K(p_1, p_2,\ldots , p_t)$ to be  nonempty is:
\begin{equation}\label{>2}\frac{p_1}{p_1-1}\cdots
\frac{p_t}{p_t-1}>2.\end{equation} It is interesting that this necessary
condition \eqref{>2}
is also sufficient to guarantee that  $K(p_1, p_2,\ldots , p_t)$ is
nonempty.

\begin{theorem}\label{thmb}Let $p_1, p_2,\ldots , p_t$ be distinct primes.
Then $K(p_1, p_2,\ldots , p_t)$ is nonempty if and only if
$$\frac{p_1}{p_1-1}\cdots
\frac{p_t}{p_t-1}>2.$$

\end{theorem}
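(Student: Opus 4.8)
The necessity of \eqref{>2} has already been explained in the text: a finite sum of distinct unit fractions drawn from $A=S(p_1,\dots,p_t)\setminus\{1\}$ cannot reach $1$ unless the full series $\sum_{i\ge 1}1/a_i=\frac{p_1}{p_1-1}\cdots\frac{p_t}{p_t-1}-1$ exceeds $1$, i.e.\ unless \eqref{>2} holds. So the whole content is the sufficiency direction, and the plan is to give an explicit greedy-type construction showing that under \eqref{>2} one can always represent $1$ as a finite sum of distinct elements of $A$.

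The key structural fact I would use is that $S(p_1,\dots,p_t)$ is closed under multiplication, so any identity $\sum 1/x_i = 1$ can be ``refined'': if $q=p_j$ is one of the primes, then replacing a single term $1/x$ by the $p_j$ terms $\frac{1}{p_jx}+\frac{1}{p_jx}+\cdots+\frac{1}{p_jx}$ (that is, using $\frac1x = \frac{p_j}{p_jx}$) keeps all denominators inside $S(p_1,\dots,p_t)$, though it introduces repetitions that must later be removed. The serious obstacle is exactly the distinctness requirement: a naive splitting produces equal denominators, and one must argue that collisions can always be resolved. I would handle this by first proving a weaker statement — that under \eqref{>2} the equation $\sum_{i=1}^k 1/x_i=1$ has a solution with $x_i\in S(p_1,\dots,p_t)$ and repetitions allowed — and then running a ``collision-removal'' procedure: whenever two equal terms $\frac1x+\frac1x$ appear, rewrite them using $\frac1x+\frac1x = \frac{2}{x}$ and expand $2/x$ via the available prime multipliers (using that $\frac{p_j}{p_j-1}$-type factors let us move mass to previously unused denominators), terminating because the set of ``small'' denominators below any fixed bound is finite and each step either reduces the number of collisions or pushes them to strictly larger denominators. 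Showing this terminates is the heart of the matter.

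To get the repetitions-allowed solution in the first place, I would use a simple averaging/greedy argument: since $\frac{p_1}{p_1-1}\cdots\frac{p_t}{p_t-1}>2$, there is a finite truncation $a_1<a_2<\cdots<a_N$ of $A$ with $\sum_{i=1}^N 1/a_i>1$; now remove elements greedily from this finite set to bring the sum down toward $1$, and use the fact that consecutive ratios in $S(p_1,\dots,p_t)$ are bounded (each element is within a factor $\le p_t$ of the next) to guarantee that the overshoot at the end is at most $1/a_i$ for some available $a_i$, hence can be absorbed — or, if an exact hit fails, the residual is a rational with denominator in $S(p_1,\dots,p_t)$, which we then attack by the splitting trick above. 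Here one should invoke Lemma~\ref{lema} (or the $k,l\mapsto k+l-1$ remark) to combine partial representations if it is cleaner to build up $1$ from representations of, say, $1/p_j^{m}$ summing to convenient rationals.

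Alternatively, and perhaps more cleanly, I would reduce to the single-prime-power case: if \eqref{>2} holds then $\frac{p_j}{p_j-1}>2^{1/t}$ for at least one $j$ is too weak, but one does have $\prod \frac{p_j}{p_j-1}>2$, and one can peel off primes one at a time, representing $1$ first as a sum of unit fractions with denominators powers of $p_1$ times a controlled rational remainder, then clearing the remainder using $p_2$, and so on — at each stage the available ``room'' $\frac{p_j}{p_j-1}-1=\frac{1}{p_j-1}$ is enough to absorb the carry because the product condition \eqref{>2} says the total room exceeds $1$. The main obstacle throughout, regardless of which route is taken, is bookkeeping the distinctness of denominators while splitting; I expect the cleanest write-up pushes all collisions to arbitrarily high powers of $p_t$ and observes that only finitely many survive, so the process halts with a genuine solution in distinct elements of $S(p_1,\dots,p_t)$.
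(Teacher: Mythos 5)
There is a genuine gap, and you have in fact pointed at it yourself: both of your routes defer exactly the step that carries all the difficulty. In the splitting/collision-removal route, the claim that one can always resolve repeated denominators while staying inside $S(p_1,\ldots,p_t)$, and that the process terminates, is asserted but not argued (``Showing this terminates is the heart of the matter''); your termination criterion --- each step reduces the number of collisions or pushes them to larger denominators --- does not by itself terminate, since collisions can be pushed upward forever. The standard splitting identity $\frac1x=\frac1{x+1}+\frac1{x(x+1)}$ is unavailable here because $x+1$ leaves the allowed set, and rewriting $\frac1x$ as $p_j$ copies of $\frac1{p_jx}$ gives no evident way to separate the copies again using only denominators from $S(p_1,\ldots,p_t)$. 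Similarly, in the greedy route the easy part is getting sums that approach $1$ from below (or overshoot slightly); the hard part is hitting $1$ \emph{exactly} with \emph{distinct} denominators, and saying ``the residual is a rational with denominator in $S(p_1,\ldots,p_t)$, which we then attack by the splitting trick'' just restates the original problem for that residual. The condition $\prod p_i/(p_i-1)>2$ guarantees enough mass, but mass alone does not give exact representability: some genuine arithmetic input is needed to close the final gap.

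The paper supplies that input in a way your sketch has no substitute for: it deduces sufficiency from Corollary \ref{cor1}, which rests on Theorem \ref{thmc}, which in turn is an application of Graham's theorem (Theorem A) together with the completeness theorem of Birch/Hegyv\'ari (Lemma \ref{lemb}) stating that for coprime $p,q>1$ the set $\{p^\alpha q^\beta:\ 0\le\beta\le K\}$ is complete, i.e.\ every sufficiently large integer is a sum of \emph{distinct} elements. Completeness is precisely the tool that converts ``we can get arbitrarily close'' (accessibility, which is proved by a greedy argument very much in the spirit of your second paragraph) into an exact representation by distinct unit fractions. Your proposal never proves, cites, or replaces this completeness statement, so as written it does not yield Theorem \ref{thmb}; to repair it along your lines you would essentially have to re-prove a result of Birch--Hegyv\'ari type, which is a nontrivial theorem and not something the bookkeeping you describe can be expected to produce.
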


For a set $B$ of numbers, let
$$P(B)=\{ \sum_{a\in I} a \mid I\subseteq B, 0< |I|<\infty \} $$
denote the set of finite subset sums. For a set $B$ of nonzero
numbers, let
$$B^{-1}=\{ b^{-1} \mid  b\in B\} .$$

In order to prove Theorem \ref{thmb}, we make use of a well known
theorem of Graham \cite[Theorem 5]{Graham} and Birch \cite{Birch}
and observe, that $1$, or more generally $\frac{a}{b}$ can be
decomposed into a finite sum of distinct reciprocals for a more
general type of integer sequences. Graham's original hypotheses are
different, we adapt  his work for our applications. We prove the
following theorem.

\begin{theorem}\label{thmc} Let $A=\{ a_1<a_2<\cdots \} $ be a
sequence of positive integers such that

(a) $A$ is complete, i.e.~all sufficiently large integers are
contained in $P(A)$.

(b) $A$ is multiplicative, i.e.~for all $i,j$
with $a_i,a_j\in A$,  also $a_ia_j\in A$.

(c) $$\sum_{j=i+1}^\infty \frac 1{a_j} \ge \frac 1{a_i} , \quad
\text{ for all } i\ge 1.$$

Then $p/q\in P(A^{-1})$, where $(p, q)=1$, if and only if

(d) $$\frac pq <\sum_{i=1}^\infty \frac 1{a_i}$$ and

(e) $q$  divides some term of $A$.\end{theorem}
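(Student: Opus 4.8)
The plan is to adapt the classical Graham--Birch argument to this more flexible setting. The necessity of (d) is immediate, since any finite subsum of $A^{-1}$ is strictly less than the full series $\sum 1/a_i$; and the necessity of (e) follows from clearing denominators: if $p/q=\sum_{a\in I}1/a$ with $I\subseteq A$ finite, then $q\mid \prod_{a\in I}a$, and using multiplicativity (b) together with the fact that $A$ consists of integers, one shows $q$ must divide a single term of $A$ (here one uses that $A=S(p_1,\dots,p_t)\setminus\{1\}$-type sets are closed under taking divisors that are themselves products of the allowed primes; more precisely, one factors $q$ into prime powers and matches each against the factorization available among the $a\in I$). So the substance is the sufficiency direction.

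For sufficiency, suppose $(d)$ and $(e)$ hold. First I would use $(e)$ to pick $a_N\in A$ with $q\mid a_N$, so that $p/q = p a_N/(q a_N) \cdot (1/a_N)$ has the shape $m/a_N$ with $m=pa_N/q$ an integer; the goal becomes to write $m/a_N$ as a sum of distinct reciprocals from $A$. I would then run a greedy-type descent: repeatedly subtract off the largest available reciprocal $1/a$ from $A$ (not yet used) that does not overshoot the remaining target, using condition (c) to guarantee that the tail of the series is always large enough that the process does not get ``stuck'' with a positive remainder that is too small to be covered. The key quantitative point is that after subtracting $1/a_i$, condition (c) ensures $\sum_{j>i}1/a_j\ge 1/a_i$ still dominates, so one can keep a running invariant (the remaining fraction is at most the sum of the unused tail) that is preserved. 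Combined with multiplicativity (b) and completeness (a) to control the denominators that appear — (a) is what lets us clear the remaining fraction down to something that is an honest subsum — the descent terminates.

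Concretely, I would structure the descent in two phases. In the first phase, reduce to the case where the target has the form $1/n$ for some $n$ that is a subsum of $A$ (here I'd invoke completeness (a): since all large integers lie in $P(A)$, one can arrange a multiple of the target to have integral, representable ``numerator data''), using multiplicativity to keep denominators inside $A$. In the second phase, expand $1/n$ itself: if $n=a_{i_1}+\cdots+a_{i_r}$ with the $a_{i_j}\in A$ distinct, then $1/n=\sum_j a_{i_j}/(n\,a_{i_j})$ — wait, that is not a unit-fraction decomposition, so instead I would iterate the splitting $1/n = 1/(n+1) + 1/(n(n+1))$ style identities adapted to $A$, or more cleanly, use the Graham identity that a complete multiplicative sequence whose tail condition (c) holds admits $1/n\in P(A^{-1})$ whenever $n$ is itself a sum of finitely many elements of $A$. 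The distinctness of the reciprocals is maintained by always choosing, at each step, an element of $A$ strictly smaller than all those used so far in that block, which (c) makes possible.

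The main obstacle, I expect, is bookkeeping the distinctness of denominators throughout the descent while simultaneously keeping every new denominator inside the (not-closed-under-addition) set $A$: the greedy step is easy to run over all integers, but forcing the chosen denominators to be products of $p_1,\dots,p_t$ only (equivalently, to lie in $A$) is exactly where multiplicativity (b) and completeness (a) must be wielded carefully — one cannot simply take $1/(n(n+1))$ because $n+1$ need not be in $A$. The correct move is to exploit (a) to replace the ``bad'' denominator by a representable multiple and then redistribute via (b); verifying that this redistribution never collides with previously used denominators is the delicate part, and handling it is where Graham's and Birch's techniques (suitably modified since our hypotheses replace theirs) do the real work.
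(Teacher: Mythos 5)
Your necessity arguments are fine in substance (for (e) you do not need any special structure on $A$: if $p/q=\sum_{a\in I}1/a$ then $q$ divides $\prod_{a\in I}a$, which lies in $A$ by repeated use of (b), so $q$ divides a term of $A$; the detour through ``$S(p_1,\dots,p_t)$-type sets closed under divisors'' is both unnecessary and not available for a general $A$ satisfying (a)--(c)). The problem is the sufficiency direction, which is the entire content of the theorem and which your proposal does not actually prove. Your greedy descent, run directly on the target $p/q$ (or $m/a_N$), faces exactly the difficulty you name at the end: the greedy step approximates from below but gives no mechanism for exact termination with denominators confined to $A$, and your ``second phase'' resolves this only by invoking ``the Graham identity that a complete multiplicative sequence whose tail condition (c) holds admits $1/n\in P(A^{-1})$ whenever $n$ is a sum of finitely many elements of $A$'' --- but no such identity is proved or cited precisely; it is essentially the statement being proved, so the argument is circular at its crucial step. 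Saying that collisions and redistribution are ``where Graham's and Birch's techniques do the real work'' is an acknowledgement of the gap, not a proof.

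The missing idea is the correct way to outsource that hard work: apply Graham's Theorem 5 as a black box. Conditions (a) and (b) give that $M(A)=A$ and that $M(A)$ is complete; (b) also gives the bounded-ratio hypothesis, since $a_2a_n\in A$ and $a_2a_n>a_n$ force $a_{n+1}\le a_2a_n$, i.e.\ $a_{n+1}/a_n\le a_2$; and (e) is Graham's divisibility hypothesis. What then remains is only to show that $p/q$ is $A^{-1}$-\emph{accessible}, i.e.\ approximable from above to within any $\varepsilon>0$ by finite subsums of $A^{-1}$ --- a strictly weaker goal than exact representation, and precisely the goal your greedy method can reach. This is how the paper proceeds: assuming $p/q\notin P(A^{-1})$, one builds indices $i_1<i_2<\cdots$ with
$$0<\frac pq-\sum_{l=1}^k\frac 1{a_{i_l}}<\sum_{i=i_k+1}^{\infty}\frac 1{a_i},$$
the invariant being maintained by (c) and started by (d), and then tops up each partial sum with a consecutive block $1/a_{i_k+1}+\cdots+1/a_{j_k}$ chosen minimally so as to overshoot $p/q$ by less than $1/a_{j_k}\to 0$; Graham's theorem then converts accessibility into $p/q\in P(A^{-1})$, a contradiction. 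Without this reduction (or a full reproof of Graham's completeness machinery, which you have not supplied), your outline does not close.
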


This implies the following corollary:

\begin{corollary}\label{cor1}Let $A=\{ a_1<a_2<\cdots \} $ be a
sequence of  integers with $a_1>1$ such that

(a) $A$ is complete;

(b) $A$ is multiplicative;

(c) $$\sum_{i=1}^\infty \frac 1{a_i} > 1 .$$

Then $1\in P(A^{-1})$.\end{corollary}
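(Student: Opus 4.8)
The plan is to deduce Corollary \ref{cor1} directly from Theorem \ref{thmc}, applied with $p/q=1$. First I would observe that hypotheses (a) and (b) of Theorem \ref{thmc} are assumed verbatim in the corollary, so the only hypothesis of Theorem \ref{thmc} that needs an argument is its condition (c), the tail inequality
$$\sum_{j=i+1}^\infty \frac 1{a_j} \ge \frac 1{a_i}\qquad\text{for all } i\ge 1.$$

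To establish this I would use multiplicativity together with the assumption $a_1>1$. Fix $i\ge 1$. Since $A$ is multiplicative, $a_ia_k\in A$ for every $k\ge 1$; the map $k\mapsto a_ia_k$ is injective, and since $a_k\ge a_1>1$ we have $a_ia_k>a_i$. Hence $\{a_ia_k : k\ge 1\}$ is a set of pairwise distinct elements of $A$, each strictly larger than $a_i$, so it is a subset of $\{a_j : j>i\}$. Summing reciprocals,
$$\sum_{j=i+1}^\infty \frac 1{a_j}\ \ge\ \sum_{k=1}^\infty \frac 1{a_ia_k}\ =\ \frac 1{a_i}\sum_{k=1}^\infty \frac 1{a_k}\ >\ \frac 1{a_i},$$
where the last step is hypothesis (c) of the corollary. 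This gives condition (c) of Theorem \ref{thmc} (in fact with strict inequality).

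It then remains to check conditions (d) and (e) of Theorem \ref{thmc} for the fraction $p/q=1/1$, i.e.\ $p=q=1$ with $(p,q)=1$. Condition (d) asks that $1<\sum_{i=1}^\infty \frac 1{a_i}$, which is exactly hypothesis (c) of the corollary; condition (e) asks that $q=1$ divide some term of $A$, which is automatic. Therefore Theorem \ref{thmc} yields $1=p/q\in P(A^{-1})$, as claimed. I do not anticipate any real obstacle: the only nontrivial point is the short counting argument translating $\sum 1/a_i>1$ and multiplicativity into the tail bound, and the substantive work has already been carried out in Theorem \ref{thmc}.
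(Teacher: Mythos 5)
Your proposal is correct and follows essentially the same route as the paper: both reduce Corollary \ref{cor1} to Theorem \ref{thmc} with $p/q=1$ and verify the tail condition by noting that the distinct elements $a_ia_1<a_ia_2<\cdots$ of $A$ all exceed $a_i$, so $\sum_{j>i}1/a_j\ge\sum_{k\ge 1}1/(a_ia_k)>1/a_i$ by hypothesis (c). Your write-up merely spells out the injectivity and the checks of conditions (d) and (e), which the paper leaves implicit.
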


We pose the following problem for future research.

\begin{problem*} Let $p_1, p_2,\ldots , p_t$ be distinct primes. Is
there a constant $V$ depending only on $p_1, p_2,\ldots , p_t$
such that
$$T_{k}(p_1, p_2,\ldots ,
p_t)\le V^k ?$$\end{problem*}

Finally, we give two special results.

\begin{theorem}\label{thm1} (a) $T_k(3,5,7)\ge c_1\sqrt{62}^k$
for a computable constant $c_1>0$ and any odd number $k\ge 11$;

(b) $T_k(2,3,5)\ge c_2\sqrt{368}^k$ for a computable constant
$c_2>0$ and any integer $k\ge 3$.

\end{theorem}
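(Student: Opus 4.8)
The plan is to exhibit, in each of the two cases, an explicit ``seed'' solution together with a multiplicative doubling (or splitting) device that turns one solution on $k$ denominators into many solutions on $k+2$ (resp.\ $k+1$) denominators, and then to iterate. The crucial arithmetic identities are $\frac1n=\frac1{2n}+\frac1{2n}$ (useless directly, since it repeats a denominator) and more usefully, for part (b), an identity of the form $\frac1n=\frac1{an}+\frac1{bn}+\cdots$ with $a,b,\dots\in S(2,3,5)$ distinct, which replaces one term by several and increases the denominator count by a fixed amount while keeping all denominators inside $S(2,3,5)$ and, with care, distinct from the others. For part (a) one wants an analogous identity with $a,b\in S(3,5,7)$ that increases the count by exactly $2$ (so as to preserve the parity of $k$, consistent with $T_o$-type restrictions); a natural candidate comes from splitting $\frac1n$ using the relation coming from $1=\frac13+\frac15+\frac17+\frac19+\cdots$ restricted suitably, or from a two-term split such as $\frac1n=\frac1{3n}+\frac1{?}$ — the precise identity must be chosen so that the multiplier set has the right size and lands in $S(3,5,7)$.

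Concretely, I would proceed as follows. First, fix a base solution: for (a), Sierpi\'nski's decomposition $1=\frac13+\frac15+\frac17+\frac19+\frac1{15}+\frac1{21}+\frac1{27}+\frac1{35}+\frac1{63}+\frac1{105}+\frac1{135}$ quoted in the introduction, which has $11$ terms, all in $S(3,5,7)$; for (b), a short base solution in $S(2,3,5)$ with $3$ terms, e.g.\ $1=\frac12+\frac13+\frac16$. Second, identify a ``replacement rule'': pick a distinguished denominator $x$ in the current solution and a fixed identity $\frac1x=\sum_{j}\frac1{m_j x}$ with the $m_j$ a fixed finite subset of $S$ of size $r$ (so $r=3$ for (b) to go $k\mapsto k+2$? — we actually need $k\mapsto k+1$, so $r=2$; for (a) we need $k\mapsto k+2$, so $r=3$). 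Third — and this is where the counting enters — argue that at stage $k$ the number of denominators that are ``large enough'' and ``of the right multiplicative shape'' to admit the replacement without colliding with existing denominators grows like a constant times the total, so that the number of distinct solutions one can produce multiplies by a factor of roughly $\sqrt{62}$ (resp.\ $\sqrt{368}$) per two steps (resp.\ per step). The constants $62$ and $368$ should emerge as sizes of explicit ``gadget'' sets: note $62 = 2\cdot 31$, hmm, that is not $3$-$5$-$7$-smooth, so more likely $62$ is the count of something, and $\sqrt{62}^k = 62^{k/2}$ reflects a gain of a factor $62$ every two terms; similarly $\sqrt{368}^k$ with $368 = 16\cdot 23$, again suggesting $368$ counts configurations rather than being smooth. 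So the real content is a clean injective encoding: to each admissible sequence of replacement choices one associates a distinct solution, and one counts the sequences.

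The main obstacle will be the distinctness bookkeeping: when iterating a replacement $\frac1x\mapsto\sum_j\frac1{m_jx}$, the new denominators $m_jx$ must avoid all other denominators currently present, and different choice-sequences must yield genuinely different final multisets (injectivity of the encoding). Controlling this requires choosing the gadget so that the multipliers introduced at one stage are ``fresh'' relative to later stages — for instance by always splitting the current \emph{largest} denominator, or by tracking a suitable valuation (the exponent of one prime) that strictly increases under the replacement, which both guarantees no collision and makes the encoding invertible. Once that invariant is in place, the lower bound follows by a direct count of the number of valid choice-sequences of length proportional to $k$, and the constants $\sqrt{62}$, $\sqrt{368}$ are read off from the branching factor of the gadget; the computable constants $c_1,c_2$ absorb the finitely many initial steps before the recursion kicks in, i.e.\ the passage from the base solution (with $11$, resp.\ $3$, terms) up to the first $k$ where the growth estimate is valid. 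I expect part (b) to be the easier of the two, since $2\in S$ gives more flexible splitting identities, and part (a) to require a more delicate choice of gadget because all multipliers must avoid the prime $2$.
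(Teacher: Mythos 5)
Your outline is the paper's outline: start from an explicit seed solution, repeatedly replace the largest denominator by a fixed finite ``gadget'' of splittings inside $S$, prove the resulting encoding injective, and absorb the finitely many initial steps into the computable constant. But as it stands the proposal has two genuine gaps. First, the quantitative core is missing: the constants $62$ and $368$ are not something that ``should emerge'' --- they are the sizes of explicitly enumerated gadget sets, and without exhibiting them there is no bound. In the paper one counts (by computer, over bounded exponents) exactly $62$ four-tuples $(a,b,c,d)$ with $a,b,c,d\in S(3,5,7)$, $a+b+c=d$, $a<b<c$, $(a,b,c)=1$ and $a\mid d$, and $368$ such tuples in $S(2,3,5)$. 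Each tuple turns a solution whose largest denominator $x_k$ is divisible by $M_2=3^{20}5^{20}7^{20}$ (resp.\ $M_1=2^{20}3^{20}5^{20}$) into a solution with two more terms via $\frac1{x_k}=\frac1{dx_k/c}+\frac1{dx_k/b}+\frac1{dx_k/a}$; the divisibility invariant $M\mid x_k$ is what guarantees that $dx_k/a,dx_k/b,dx_k/c$ are integers larger than $x_k$ (hence distinct from all old terms), and $a\mid d$ makes the new largest term again a multiple of $M$, so the \emph{same} gadget set applies at every stage. A short uniqueness lemma (different tuples give different triples $\{dx_k/a,dx_k/b,dx_k/c\}$) then yields $|B_{k+2}(M_2)|\ge 62\,|B_k(M_2)|$ and $|A_{k+2}(M_1)|\ge 368\,|A_k(M_1)|$. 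Your suggestion of always splitting the largest denominator is exactly the right distinctness device, but without the divisibility invariant $M\mid x_k$ your ``fresh multipliers'' argument does not close.

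Second, your hesitation about the step size in part (b) points at a step that would actually fail: a two-term replacement rule ($k\mapsto k+1$, $r=2$) would need at least $20>\sqrt{368}$ essentially distinct coprime splittings $\frac1x=\frac1{dx/b}+\frac1{dx/a}$ with $a+b=d$ all $5$-smooth, and the coprime solutions of $a+b=d$ in $\{2^\alpha3^\beta5^\gamma\}$ are far too few for that. The recursion is $k\mapsto k+2$ in \emph{both} parts; in (b) the parity problem is handled not inside the recursion but by two seeds of opposite parity (an $8$-term solution such as $\frac12+\frac14+\frac18+\frac1{16}+\frac1{30}+\frac1{60}+\frac1{120}+\frac1{240}=1$ and one obtained from it by a $+1$ or $+3$ identity like $\frac1a=\frac1{3a/2}+\frac1{3a}$), each first pushed, by further splittings, to a solution whose largest denominator is divisible by $M_1$; the small values $3\le k\le k_0$ and the statement $T_k(2,3,5)\ge 1$ are covered by the telescoping identity $\frac12+\frac13+\cdots+\frac1{3^k}+\frac1{2\cdot 3^k}=1$. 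Similarly in (a) the Sierpi\'nski $11$-term seed must first be inflated (e.g.\ via $\frac1{x}=\frac1{105x/63}+\frac1{105x/27}+\frac1{105x/9}+\frac1{105x/5}+\frac1{105x}$ and $\frac1x=\frac1{5x/3}+\frac1{3x}+\frac1{15x}$) until the largest denominator is divisible by $M_2$ and every odd $k\ge 11$ is reached; only then does the $62$-fold branching start, and $c_1$ absorbs the initial range. So: right skeleton, but the gadget enumeration, the divisibility invariant, and the correct $+2$ step with two seeds are all needed to make it a proof.
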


\ \\

\section{Proof of Theorem \ref{thmto1}}

In order to prove Theorem \ref{thmto1}, we establish a
relation between $T_o(2k-1)$ and $T_o(2k+1)$, which inductively gives
a bound for an arbitrary odd number of fractions. For this purpose we
first establish the following lemma.

\begin{lemma}\label{T_o2} If $n$ is odd, then the number of solutions of
$$\frac 1n= \frac 1u+\frac 1v+\frac 1w, \quad n<u<v<w, 2\nmid uvw, d(w)\ge 2d(n)+1$$
is at least $\frac 12 d(n)-1$.\end{lemma}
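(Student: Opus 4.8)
To prove Lemma~\ref{T_o2} we want, for each odd $n$, to produce at least $\tfrac12 d(n)-1$ essentially different decompositions $\tfrac1n=\tfrac1u+\tfrac1v+\tfrac1w$ with $n<u<v<w$, all of $u,v,w$ odd, and $d(w)\ge 2d(n)+1$. The natural starting point is the classical identity
$$\frac1m = \frac1{m+1} + \frac1{m(m+1)},$$
which, applied with $m$ replaced by a multiple of $n$, splits one unit fraction into two. First I would write $\tfrac1n=\tfrac1u+\tfrac1{u-n}\cdot\tfrac{?}{?}$ more carefully: if $n\mid u$ is false we instead use the standard parametrization of solutions of $\tfrac1n=\tfrac1u+\tfrac1v$, namely $u=n+d$, $v=n+n^2/d = n(n+d)/d$ where $d$ runs over divisors of $n^2$ with $d<n$ (so that $u<v$). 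That gives $d(n^2)$-type many two-term splittings, but we need \emph{three} terms and an odd, highly-composite $w$. So the cleaner route is: fix the two-term split $\tfrac1n=\tfrac1{2n}+\tfrac1{2n}$ — no, that repeats and $2n$ is even. Instead take $\tfrac1n = \tfrac1{n+e}+\tfrac1{n(n+e)/e}$ for a single convenient odd choice (e.g. pick $e$ a small divisor of $n$ so both denominators are odd and exceed $n$), and then split the \emph{larger} of the two resulting fractions, $\tfrac1{w'}$ with $w'=n(n+e)/e$, using a divisor $d\mid (w')^2$: $\tfrac1{w'}=\tfrac1{w'+d}+\tfrac1{w'(w'+d)/d}$. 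This produces one family indexed by the divisors $d$ of $(w')^2$.

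The key counting step is to show this family is large and that one can arrange $d(w)\ge 2d(n)+1$. Writing $v=w'+d$ and $w=w'(w'+d)/d$, distinct divisors $d$ give distinct pairs $(v,w)$, so the number of solutions is at least the number of admissible $d$: those $d\mid (w')^2$ with $d<w'$ (to get $v<w$), with $d$ odd (to keep things odd), and with $w'+d$ still satisfying the ordering $u<v$, i.e. $n+e<w'+d$, which is automatic once $w'$ is large. The divisors $d$ of $(w')^2$ less than $w'$ number $\tfrac12(d((w')^2)-1)$, and since $w'$ is a multiple of $n$ (indeed $w'=n\cdot\tfrac{n+e}{e}$), we get $d(w')\ge d(n)$ and hence at least roughly $\tfrac12 d(n)$ choices — this is where the bound $\tfrac12 d(n)-1$ comes from, the ``$-1$'' absorbing the boundary/ordering losses. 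Finally $w=w'(w'+d)/d$: choosing $d=1$ already gives $w=w'(w'+1)$, a product of two coprime integers, so $d(w)=d(w')\,d(w'+1)\ge 2d(w')\ge 2d(n)$; adjusting the small parameter $e$ (or noting $d(w'+1)\ge 2$ strictly since $w'+1$ is even hence $\ge 4$) pushes this to $\ge 2d(n)+1$, and for the other values of $d$ one checks $w$ only gains more divisors.

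The main obstacle I expect is \emph{parity bookkeeping combined with the strict inequalities} $n<u<v<w$: one must choose the auxiliary parameter $e$ (and restrict the divisors $d$ to odd ones) so that every denominator produced is odd, which roughly halves the naive divisor count and is exactly why the statement has the factor $\tfrac12$ rather than $d(n)$; and simultaneously one must verify $u<v<w$ holds for \emph{all} the chosen $d$, not just generically. The cleanest way to handle this is probably to first reduce to the case where we split a single fixed odd multiple $N=\lambda n$ of $n$ (with $\lambda$ chosen odd and coprime to $n$, or a suitable power, so that $N$ is odd and $d(N)\ge d(n)$), via $\tfrac1n=\tfrac1N+\tfrac1{n}-\tfrac1N$ rewritten as a genuine two-term identity, and then do the divisor split on $N$; the ordering then follows because $N\gg n$. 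The remaining check that $d(w)\ge 2d(n)+1$ is then a short argument using $w=N(N+d)/d$ and coprimality of consecutive-type factors.
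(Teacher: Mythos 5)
There is a structural parity obstruction that defeats your plan before the counting even starts. If $v,w$ are both odd, then $\tfrac1v+\tfrac1w=\tfrac{v+w}{vw}$ has even numerator and odd denominator, and since $\gcd(v+w,vw)$ is odd the numerator stays even after reduction; hence a sum of two odd unit fractions is \emph{never} a unit fraction. Consequently no all-odd triple $n<u<v<w$ can ever arise from your two-stage scheme of first splitting $\tfrac1n=\tfrac1{n+e}+\tfrac1{w'}$ and then splitting $\tfrac1{w'}=\tfrac1{w'+d}+\tfrac1{w'(w'+d)/d}$: concretely, with $n$ odd any divisor $e$ of $n^2$ is odd, so $u=n+e$ is even, and even if you tolerate an even intermediate, $w'$ and $d$ odd force $w'+d$ (and hence $w=w'(w'+d)/d$) to be even. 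Restricting to ``odd $d$'' therefore does not halve the family, it empties it; the factor $\tfrac12$ in the lemma cannot be obtained this way. Your fallback of splitting an odd multiple $N=\lambda n$ runs into exactly the same wall.

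The paper avoids this by writing down a genuinely three-term identity in which a single factor $2$ absorbs the even numerator: for each divisor $k>1$ of $n$ with $k\equiv 1\pmod 4$ it takes
$$u=n+2,\qquad v=\frac{n(n+2)(k+1)}{2k},\qquad w=\frac{n(n+2)(k+1)}{2},$$
which satisfies $\tfrac1n=\tfrac1u+\tfrac1v+\tfrac1w$ with $n<u<v<w$, all odd because $(k+1)/2$ is odd precisely when $k\equiv 1\pmod 4$, and $d(w)\ge d(n)d(n+2)+1\ge 2d(n)+1$ since $(n,n+2)=1$ and $(k+1)/2>1$; distinct $k$ give distinct $w$. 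The count $\tfrac12 d(n)-1$ then comes not from discarding even divisors but from the sum-of-two-squares formula $r_2(n)=4(d_1(n)-d_3(n))\ge 0$, which gives $d_1(n)\ge\tfrac12 d(n)$ divisors $\equiv 1\pmod 4$. Your proposal contains neither the three-term identity nor any substitute for this congruence-class count, so the gap is not a matter of bookkeeping: the decomposition mechanism itself has to be replaced.
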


\begin{proof}

Recall that the number of ways to write an integer $n$ as a sum of two
squares is  $r_2(n)=4(d_1(n)-d_3(n))$, where $d_i(n)$ is the number of positive
divisors $d$ of $n$ with $d\equiv i\pmod 4$ $(i=1, 3)$,
 (see \cite[Theorem 278 and
(16.9.2)]{Hardy} or \cite[Theorem 14.3]{Nathanson}):
As $r_2(n)$  is a non-negative integer
it follows that $d_1(n) \geq d_3(n)$ and
 $d_1(n) \geq \frac{1}{2}d(n)$.

 Let $k>1$ be a
positive factor of $n$ of the form $4l+1$. Let
$$u=n+2, \quad v=\frac 1{2k}n(n+2)(k+1), \quad w=\frac 12
n(n+2)(k+1).$$ Then
$$\frac 1n= \frac 1u+\frac 1v+\frac 1w, \quad n<u<v<w,\quad  2\nmid uvw.$$
Since $(k+1)/2>1$ is an integer and $(n, n+2)=1$, we have
$$d(w)=d(n(n+2)(k+1)/2)\ge d(n(n+2))+1=d(n)d(n+2)+1\ge 2d(n)+1.$$
This completes the proof of Lemma \ref{T_o2}.

\end{proof}

\begin{proof}[Proof of Theorem \ref{thmto1}]

 Let $T_o'(2k+1)$  denote the number of
solutions of $\sum_{i=1}^{2k+1}\frac 1{x_i}=1$ in odd numbers
$1<x_1<x_2<\cdots <x_{2k+1}$ with $d(x_{2k+1})>2^{k}$. Suppose
that $1<x_1<x_2<\cdots <x_{2k-1}(k\ge 5)$ is a solution of
$\sum_{i=1}^{2k-1}\frac 1{x_i}=1$ in odd numbers with
$d(x_{2k-1})> 2^{k-1}$.  By Lemma \ref{T_o2} the number of
solutions of
$$\frac 1{x_{2k-1}}= \frac 1u+\frac 1v+\frac 1w, \quad x_{2k-1}<u<v<w, 2\nmid uvw, d(w)\ge 2d(x_{2k-1})+1$$
is at least $\frac 12 d(x_{2k-1})-1$. Since
$$d(w)\ge 2d(x_{2k-1})+1>2^{k},\quad \frac 12 d(x_{2k-1})-1\ge \frac
12 (2^{k-1}+1)-1=2^{k-2}-\frac 12,$$ we have that the number of
solutions of
$$\frac 1{x_{2k-1}}= \frac 1u+\frac 1v+\frac 1w, \quad x_{2k-1}<u<v<w, 2\nmid uvw, d(w)> 2^{k}$$
is at least $2^{k-2}$. Hence
$$T_o'(2k+1)\ge 2^{k-2}T_o'(2k-1).$$
By Shiu \cite{Shiu} (see also \cite{Burshtein}) there exist $9$
odd numbers $1<x_1<x_2<\cdots <x_{9}$ with $x_{9}=10395$ and
$$\sum_{i=1}^{9}\frac 1{x_i}=1.$$
Since $d(10395)=32$, we have  $T_o'(9)\ge 1$. Thus
$$T_o'(2k+1)\ge 2^{k-2}T_o'(2k-1)\ge \cdots \ge
2^{(k-2)+(k-3)+\cdots +(5-2)}T_o'(9)\ge 2^{\frac 12(k+1)(k-4)}.$$
Hence
$$T_o(2k+1)\ge (\sqrt 2)^{(k+1)(k-4)}.$$
This completes the proof of Theorem \ref{thmto1}.
\end{proof}

\section{Proof of Theorem \ref{thma}}\label{proofthma}

For distinct primes $p_1, p_2,\ldots , p_t$, let $\mathcal{T}_{k}
(p_1, p_2,\ldots , p_t)$ be the set of all solutions $(x_1,\ldots
, x_k)$ of $$ \sum_{i=1}^k \frac 1{x_i}=1, 1<x_1<x_2<\cdots <x_k,
x_i\in S(p_1, p_2,\ldots , p_t).$$ Define
$$(x_1,\ldots , x_k)\ast (y_1,\ldots , y_l)=(x_1,\ldots , x_{k-1}, x_ky_1, \ldots ,
x_ky_l)$$ and
$$(a_1, \ldots , a_k)^i=(a_1,\ldots , a_k)^{i-1}\ast (a_1, \ldots ,
a_k),\quad i\ge 2.$$ It is clear that if $(x_1,\ldots , x_k)\in
\mathcal{T}_{k} (p_1, p_2,\ldots , p_t)$ and $(y_1,\ldots ,
y_l)\in \mathcal{T}_{l} (p_1, p_2,\ldots , p_t)$, then
\begin{equation}\label{eqnws}(x_1,\ldots , x_k)\ast (y_1,\ldots , y_l)\in
\mathcal{T}_{k+l-1} (p_1, p_2,\ldots , p_t).\end{equation}

The following lemma gives a recursive lower bound:

\begin{lemma}\label{lema}  Let $p_1, p_2,\ldots , p_t$ be distinct primes.
Then, for any two positive integers $k$ and $l$, we have
$$T_{k+l-1} (p_1, p_2,\ldots , p_t)\ge T_{k} (p_1, p_2,\ldots , p_t)T_{l} (p_1, p_2,\ldots ,
p_t).$$\end{lemma}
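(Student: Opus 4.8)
The plan is to prove Lemma~\ref{lema} as an immediate consequence of the combinatorial operation $\ast$ introduced just above the statement, together with the multiplication property \eqref{eqnws}. The key point is that $\ast$ should be injective on pairs of solutions, so that the product of the two counts is a genuine lower bound for the count at level $k+l-1$.

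First I would set up the map explicitly. Given $(x_1,\ldots,x_k)\in\mathcal{T}_k(p_1,\ldots,p_t)$ and $(y_1,\ldots,y_l)\in\mathcal{T}_l(p_1,\ldots,p_t)$, the tuple $(x_1,\ldots,x_{k-1},x_ky_1,\ldots,x_ky_l)$ lies in $\mathcal{T}_{k+l-1}(p_1,\ldots,p_t)$ by \eqref{eqnws}: the entries are in $S(p_1,\ldots,p_t)$ because that set is closed under multiplication, the sum of reciprocals is $1$ because $\sum_{i<k}1/x_i + (1/x_k)\sum_j 1/y_j = \sum_{i<k}1/x_i + 1/x_k = 1$, and the strict increase follows since $x_{k-1}<x_k=x_k y_1$ (as $y_1=1$ would be excluded — wait, here one uses $x_1>1$ so $1<x_1$, but the smallest element $y_1$ of a solution tuple satisfies $y_1>1$, hence $x_ky_1>x_k>x_{k-1}$) and $x_ky_1<x_ky_2<\cdots<x_ky_l$. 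So the map $\ast$ sends $\mathcal{T}_k\times\mathcal{T}_l$ into $\mathcal{T}_{k+l-1}$.

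Next I would verify injectivity of this map. From the image tuple $(z_1,\ldots,z_{k+l-1})$ one recovers $k$ as a parameter (it is fixed in advance, since we know which $\mathcal{T}_k$ and $\mathcal{T}_l$ we started from), then reads off $x_1=z_1,\ldots,x_{k-1}=z_{k-1}$ directly, and recovers $x_k=z_k$ as the $k$-th entry, since $z_k=x_ky_1$ and $y_1$ is the first coordinate of the $\mathcal{T}_l$-solution — hmm, this needs a little care, because $x_k$ itself is not literally an entry of the image tuple. The clean way is: knowing $x_1,\ldots,x_{k-1}$, we have $\sum_{i=1}^{k-1}1/x_i = 1 - 1/x_k$, so $x_k$ is determined, and then $y_j = z_{k-1+j}/x_k$ for $j=1,\ldots,l$. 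Thus the pair is recovered from its image together with the fixed value of $k$, which shows that for each fixed $k$ and $l$ the map is injective, and hence
$$T_{k+l-1}(p_1,\ldots,p_t)\ge |\mathcal{T}_k(p_1,\ldots,p_t)|\cdot|\mathcal{T}_l(p_1,\ldots,p_t)| = T_k(p_1,\ldots,p_t)\,T_l(p_1,\ldots,p_t).$$

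I do not anticipate a serious obstacle here; the lemma is essentially a packaging of \eqref{eqnws} plus a counting argument. The only subtlety worth stating carefully is the injectivity: one must observe that the splitting point $k$ is part of the data, and that $x_k$ is recovered not as a literal entry of the concatenated tuple but via the identity $1/x_k = 1 - \sum_{i<k}1/x_i$. Once that is spelled out, the inequality on cardinalities is automatic, and the proof is complete.
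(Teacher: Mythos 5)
Your proof is correct and follows the same route as the paper: the paper defines exactly this map via $\ast$ into $\mathcal{T}_{k+l-1}(p_1,\ldots,p_t)$ and asserts its injectivity (which you justify in more detail, recovering $x_k$ from $1-\sum_{i<k}1/x_i$ and then the $y_j$'s by division). No gap; your write-up merely spells out the steps the paper calls ``clear.''
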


\begin{proof} We define a map $f$ from $\mathcal{T}_{k} (p_1, p_2,\ldots , p_t)\times \mathcal{T}_{l} (p_1, p_2,\ldots ,
p_t)$ to $\mathcal{T}_{k+l-1} (p_1, p_2,\ldots , p_t)$ as follows:
$$(x_1,\ldots , x_k)\times (y_1,\ldots , y_l) \longmapsto
(x_1,\ldots , x_k)\ast (y_1,\ldots , y_l).$$ It is clear that $f$
is injective. Now Lemma \ref{lema} follows immediately.
\end{proof}

\begin{lemma}\label{lemd} Let $p_1, p_2,\ldots , p_t$ be distinct
primes. If $(x_1,\ldots , x_k)\in \mathcal{T}_{k} (p_1, p_2,\ldots
, p_t)$ and $(y_1,\ldots , y_l)\in \mathcal{T}_{l} (p_1,
p_2,\ldots , p_t)$ with $x_k^{l-1}\not= y_l^{k-1}$, then
$$T_{(k-1)(l-1)+1} (p_1, p_2,\ldots , p_t)\ge 2.$$\end{lemma}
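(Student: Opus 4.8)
The plan is to exhibit two different elements of $\mathcal{T}_{(k-1)(l-1)+1}(p_1,\dots,p_t)$ by iterating the $\ast$ operation. Recall from \eqref{eqnws} that $\ast$ sends $\mathcal{T}_k\times\mathcal{T}_l$ into $\mathcal{T}_{k+l-1}$, and more generally that iterating $(x_1,\dots,x_k)^{i}$ produces a solution with $1+i(k-1)$ entries. The natural candidates are $(x_1,\dots,x_k)^{l-1}$ and $(y_1,\dots,y_l)^{k-1}$: the first lies in $\mathcal{T}_{(l-1)(k-1)+1}$ and the second in $\mathcal{T}_{(k-1)(l-1)+1}$, so both belong to the same set $\mathcal{T}_{(k-1)(l-1)+1}(p_1,\dots,p_t)$. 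It then suffices to show these two tuples are distinct, since $T_{(k-1)(l-1)+1}$ counts solutions and two distinct solutions give the desired bound $\ge 2$.

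First I would pin down the largest denominator in each of the two iterated solutions. When one forms $(x_1,\dots,x_k)\ast(x_1,\dots,x_k)$, the largest entry is $x_k\cdot x_k=x_k^2$, and inductively the largest entry of $(x_1,\dots,x_k)^{i}$ is $x_k^{\,i}$; hence the largest denominator of $(x_1,\dots,x_k)^{l-1}$ is $x_k^{\,l-1}$. Symmetrically the largest denominator of $(y_1,\dots,y_l)^{k-1}$ is $y_l^{\,k-1}$. By hypothesis $x_k^{\,l-1}\neq y_l^{\,k-1}$, so the two solutions have different maximal elements and are therefore distinct tuples in $\mathcal{T}_{(k-1)(l-1)+1}(p_1,\dots,p_t)$. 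This yields $T_{(k-1)(l-1)+1}(p_1,\dots,p_t)\ge 2$.

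One small point to check carefully is that $x_k^{\,l-1}$ really is the unique largest entry of $(x_1,\dots,x_k)^{l-1}$ — i.e. that no other coordinate can tie or exceed it — but this is immediate because every coordinate of the iterate is a product of at most $l-1$ of the original $x_i$'s (with the top coordinate using $x_k$ each time) and the $x_i$ are strictly increasing with $x_k$ largest; the strict inequalities $x_1<\dots<x_k$ are preserved under $\ast$ by construction, as already noted for \eqref{eqnws}. The only genuinely delicate issue, and the main obstacle, is the degenerate case $k=1$ or $l=1$: if, say, $l=1$ then $\mathcal{T}_1(p_1,\dots,p_t)$ would force a single $x_1>1$ with $1/x_1=1$, which is impossible, so $\mathcal{T}_l$ is empty and the hypothesis is vacuous; thus we may assume $k,l\ge 2$, in which case $l-1\ge 1$ and $k-1\ge 1$ and the iterates are well-defined. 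With that caveat the argument is complete.
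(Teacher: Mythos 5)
Your argument is correct and is essentially the same as the paper's: both form the iterates $(x_1,\ldots,x_k)^{l-1}$ and $(y_1,\ldots,y_l)^{k-1}$ in $\mathcal{T}_{(k-1)(l-1)+1}(p_1,\ldots,p_t)$ via \eqref{eqnws} and distinguish them by their largest entries $x_k^{l-1}\neq y_l^{k-1}$. Your extra remarks on the maximality of the top entry and the vacuous cases $k=1$ or $l=1$ are fine but not needed beyond what the paper states.
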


\begin{proof}
 By (\ref{eqnws}) we have
$$(x_1,\ldots , x_k)^{l-1}, (y_1,\ldots , y_l)^{k-1} \in \mathcal{T}_{(k-1)(l-1)+1} (p_1, p_2,\ldots ,
p_t).$$ Since  $x_k^{l-1}$ and $ y_l^{k-1}$ are the largest elements
of $(x_1,\ldots , x_k)^{l-1}$ and $(y_1,\ldots , y_l)^{k-1}$
respectively, by $x_k^{l-1}\not= y_l^{k-1}$ we have
$$(x_1,\ldots , x_k)^{l-1}\not= (y_1,\ldots , y_l)^{k-1}.$$
Hence $T_{(k-1)(l-1)+1} (p_1, p_2,\ldots , p_t)\ge 2$. This
completes the proof of Lemma \ref{lemd}.

\end{proof}

The following lemma is an extension of a well known theorem of Birch
\cite{Birch}. The possibility for this extension was already
mentioned by Davenport and Birch (see \cite{Birch} and
\cite{Hegyvari}). Hegyv\'ari \cite{Hegyvari} gave an explicit value
of $K(p,q)$. The upper bound of $K(p,q)$ was improved recently by
Fang \cite{fang}.

\begin{lemma}\label{lemb}(Hegyv\'ari \cite{Hegyvari}) For every
 integers $p,q$ with $p>1$, $q>1$ and $(p,q)=1$,   there exists
 $K=K(p,q)$ such that the set
 $$Y_K=\{ p^\alpha q^\beta \mid \alpha , \beta \in \mathbf{N}_0 ,
 0\le \beta \le K\} $$
 is complete. That is, every sufficiently large integer is the sum of distinct terms taken from $Y_K$. \end{lemma}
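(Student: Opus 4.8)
\noindent\textit{A proposed approach.} The plan is to reduce the completeness of $Y_{K}$ to a carrying argument in base $p^{N}$ for a suitably chosen fixed $N$. First note that, since $(p,q)=1$, two monomials $p^{\alpha}q^{\beta}$ and $p^{\alpha'}q^{\beta'}$ are equal only when $(\alpha,\beta)=(\alpha',\beta')$; hence any identity $n=\sum p^{\alpha}q^{\beta}$ automatically has distinct summands, and the word ``distinct'' may be ignored. Writing each exponent of $p$ as $\alpha=Na+b$ with $0\le b<N$, it therefore suffices to show that, for $N$ and then $K$ large enough in terms of $p,q$, every sufficiently large $n$ has the form $n=\sum_{a\ge 0}(p^{N})^{a}M_{a}$ with each $M_{a}$ a subset sum of the finite set $\Lambda=\{p^{b}q^{\beta}:0\le b<N,\ 0\le\beta\le K\}$ --- the monomials coming from different blocks carry $p$-exponents in the disjoint intervals $[Na,N(a+1))$, so distinctness across blocks is free.

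This is a base-$p^{N}$ expansion with ``digit set'' $\mathcal D:=\{\text{subset sums of }\Lambda\}$ and unbounded carries, and it is driven by two facts. (i) \textit{Residue covering}: with $d=\mathrm{ord}_{p^{N}}(q)$, if $K\ge (p^{N}-1)d$ then $q^{0},q^{d},\dots,q^{(p^{N}-1)d}$ are $p^{N}$ elements of $\Lambda$ all congruent to $1$ modulo $p^{N}$, and their subset sums already realize every residue class modulo $p^{N}$. (ii) \textit{Interval covering}: for $N\ge N_{0}(p,q)$ and $K$ large, $\mathcal D$ contains the whole interval $[c_{0},\Sigma-c_{0}]$, where $\Sigma=\sum\Lambda$ and $c_{0}=c_{0}(p,q)$ does not depend on $N,K$; this is the classical completeness criterion (each sorted element eventually at most one more than the sum of the preceding ones), valid here because below $p^{N}$ the set $\Lambda$ coincides (for $K$ large) with the $(p,q)$-smooth numbers there, whose consecutive ratios are bounded while, for $N$ large, the partial sums grow much faster. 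Granting (i) and (ii), one processes the base-$p^{N}$ digits of a large $n$ from below, peeling off at each step an $M_{a}\in\mathcal D$ congruent to the running remainder modulo $p^{N}$ and not exceeding it: by (i) such $M_{a}$ exist; the remainder shrinks by roughly a factor $p^{N}$ per step; and because $\Sigma$ is enormous once $K$ is large, there is ample room --- using the interval $[c_{0},\Sigma-c_{0}]$ from (ii) --- to steer the choices so that when the remainder first becomes moderate it lands inside $[c_{0},\Sigma-c_{0}]$ and is removed entirely in one last step. Reading off exponents, $Y_{K}$ is complete for, say, $K=(p^{N}-1)\,\mathrm{ord}_{p^{N}}(q)$ with $N$ fixed as above.

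The main obstacle is fact (ii). Hidden in it is essentially the content of Birch's theorem \cite{Birch}, that the full set $\{p^{i}q^{j}\}$ is complete: namely that the $(p,q)$-smooth numbers, past an initial segment, obey the completeness criterion, which in turn rests on their consecutive ratios tending to $1$. Beyond Birch, the extra work for the present bounded-$\beta$ statement is to verify that imposing the cap $b<N$ (for $N$ large) reintroduces no gap in the subset sums of $\Lambda$, and then the routine but slightly fussy bookkeeping of steering the carry described above; this bookkeeping, rather than any new idea, is where the passage from Birch's unbounded-exponent theorem to the bounded-$\beta$ version really lies.
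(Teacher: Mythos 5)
Two remarks, the second being the substantive one.

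First, for calibration: the paper does not prove this lemma at all --- it is quoted from Hegyv\'ari \cite{Hegyvari}, with the unbounded-exponent completeness going back to Birch \cite{Birch} --- so your argument has to stand on its own rather than be measured against a proof in the text.

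The skeleton you propose (blocks of $p$-exponents of length $N$, residue correction modulo $p^{N}$ by subset sums of $q^{jd}$ with $d=\mathrm{ord}_{p^{N}}(q)$, a final digit taken from an interval of subset sums of $\Lambda$) is sensible, and deducing the bounded-exponent statement from Birch plus a carrying argument is a legitimate, non-circular route. But your fact (ii) is false as stated, and the correct substitute is exactly the step you set aside as ``routine bookkeeping''. Concretely, take $p=101$, $q=103$. Then $\Sigma=\bigl(\sum_{b<N}p^{b}\bigr)\bigl(\sum_{\beta\le K}q^{\beta}\bigr)$ and the largest element $t=p^{N-1}q^{K}$ satisfies $\Sigma<\frac{p}{p-1}\cdot\frac{q}{q-1}\,t<1.03\,t$, so every subset sum of $\Lambda$ either contains $t$ (hence is $\ge t$) or omits it (hence is $\le\Sigma-t<0.03\,t$); no subset sum lies anywhere near $\Sigma/2$, so $\mathcal{D}$ contains no interval $[c_{0},\Sigma-c_{0}]$ for any $c_{0}<0.4\,\Sigma$, let alone one depending only on $p,q$. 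This failure occurs whenever $\frac{p}{p-1}\cdot\frac{q}{q-1}<2$, i.e.\ for all but very small $p,q$; and the justification offered (``consecutive ratios are bounded'') is not a completeness criterion in the first place --- powers of $3$ have bounded ratios. What your carrying argument actually needs is an interval $[c_{0},W]\subseteq\mathcal{D}$ with $W$ at least about $p^{N}c_{0}$ plus the largest residue-correcting digit (in your scheme up to $p^{N}q^{(p^{N}-1)d}$), so $W$ must reach far beyond $p^{N}$. Invoking Birch as a black box only yields $[c_{0},p^{N}]\subseteq\mathcal{D}$ (representations of integers $\le p^{N}$ use only smooth numbers $\le p^{N}$, which lie in $\Lambda$); pushing the interval upward through $(p^{N},W]$ requires showing that the capped set $\{p^{b}q^{\beta}:0\le b<N\}$ has multiplicative gaps close to $1$ at those scales, i.e.\ an equidistribution (three-distance) argument for $b\log p\pmod{\log q}$, $0\le b<N$, with $N$ large. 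That is precisely the content of Hegyv\'ari's theorem and of Birch's method, not bookkeeping; as written, your proposal asserts a false interval claim and leaves the genuinely hard density step unproved, so it does not yet constitute a proof, though the reduction could very likely be completed once that step is supplied.
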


\begin{lemma}\label{lemc}
Let $p_1, p_2,\ldots , p_t$ be distinct primes. If $T_{k} (p_1,
p_2,\ldots , p_t)\ge 1$ for some $k$, then  $T_{l} (p_1,
p_2,\ldots , p_t)\ge 2$ for some $l$.\end{lemma}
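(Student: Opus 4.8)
The plan is to exhibit two distinct solutions of the same length. Fix a solution $\mathbf x=(x_1,\dots,x_k)\in\mathcal T_k$; since $T_k\ge 1$ forces \eqref{>2} we have $t\ge 2$, and since $1=\tfrac1{x_1}+\cdots+\tfrac1{x_k}$ is impossible for $k\le 2$ we have $k\ge 3$. From $\mathbf x$ one already has $\mathbf x^2=\mathbf x\ast\mathbf x=(x_1,\dots,x_{k-1},x_kx_1,\dots,x_kx_k)\in\mathcal T_{2k-1}$, whose largest term is $x_k^2$. For $1\le m<k$ the identity $\tfrac1{x_m}=\tfrac1{x_m}\sum_{i=1}^k\tfrac1{x_i}=\sum_{i=1}^k\tfrac1{x_mx_i}$ lets us replace the term $\tfrac1{x_m}$ of $\mathbf x$ by these $k$ terms, giving a representation of $1$ with the $2k-1$ denominators $\{x_i:i\ne m\}\cup\{x_mx_i:1\le i\le k\}$. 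If these $2k-1$ numbers are pairwise distinct they form a solution $\mathbf y\in\mathcal T_{2k-1}$; since $m<k$ its largest term is $x_mx_k$, and $x_mx_k\ne x_k^2$ because $x_m<x_k$, so $\mathbf y\ne\mathbf x^2$ and $T_{2k-1}\ge 2$, which is the assertion. (Equivalently, $\mathbf x$ and $\mathbf y$ may be fed into Lemma \ref{lemd}.)

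So everything reduces to finding $m<k$ for which those $2k-1$ denominators are distinct. A new denominator $x_mx_i$ equals an old one $x_j$ ($j\ne m$) only if $x_j=x_mx_i$, and then $j>m$ since $x_mx_i\ge 2x_m>x_m$. Hence $m=k-1$ already works unless $x_k=x_{k-1}x_i$ for some $i$, and if that holds one passes to $m=k-2$, then $m=k-3$, and so on. If every one of these expansions is blocked — the ``rigid'' case, of which $\mathbf x=(2,3,6)$ is the smallest instance — I would instead invoke the completeness of $A:=S(p_1,\dots,p_t)\setminus\{1\}=\{a_1<a_2<\cdots\}$ granted by Lemma \ref{lemb}: $A$ is complete, multiplicative, has $\sum_i 1/a_i=\prod_j\tfrac{p_j}{p_j-1}-1>1$, and satisfies hypothesis (c) of Theorem \ref{thmc} (for $a\in A$ the map $a'\mapsto aa'$ injects $A$ into $\{b\in A:b>a\}$, so $\sum_{a_j>a_i}1/a_j>1/a_i$), so Theorem \ref{thmc} applies. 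One then represents $\tfrac{N-1}{N}=1-\tfrac1N$ for a suitably chosen large $N\in A$ as a finite subset-sum of $A^{-1}$, obtaining a second solution, and — writing $x_k=u^g$ with $u$ not a perfect power and noting that $u$ is divisible by at least two primes (else $x_k=p^e$, forcing every $x_j<x_k$ to have $v_p(x_j)\le e-1$, so $\sum_{j<k}\tfrac1{x_j}$ would have $p$-adic valuation $<e$ in its denominator, contradicting $\sum_{j<k}\tfrac1{x_j}=\tfrac{p^e-1}{p^e}$) — one arranges the largest denominator of the new solution not to be a power of $u$, so that Lemma \ref{lemd}, after matching lengths with a suitable power $\mathbf x^r$, yields $T_l\ge 2$ for some $l$.

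The main obstacle is precisely the rigid case. A rigid solution satisfies a strong system of multiplicative relations — $x_j=x_mx_i$ for some $i$ and some $j>m$, for every $m<k$ — and I would break it either (i) combinatorially, by passing to a high power $\mathbf x^{N}$ and counting: among the reciprocals appearing in $\mathbf x^{N}$, a collision upon expansion would force a divisibility relation of a fixed shape that can involve only the topmost few terms, so for $N$ large not all expansions can be blocked; or (ii) via Theorem \ref{thmc}, by manufacturing a genuinely independent second solution. Route (ii) seems cleaner, but its difficulty — which I expect to be the real work — is extracting from the existence statement of Theorem \ref{thmc} enough control on the largest denominator (equivalently, via powers of $\mathbf x$ and the $\ast$-operation, on the length modulo $k-1$) to verify the hypothesis of Lemma \ref{lemd}; making the Birch-type construction behind Lemma \ref{lemb} explicit — representing fractions by divisors of $p_1^Ap_2^B$ plus a bounded ``overshoot'' — should pin the largest term down to within a bounded factor and suffice.
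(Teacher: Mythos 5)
Your first step is fine as far as it goes: expanding a non-maximal term via $\frac1{x_m}=\sum_i\frac1{x_mx_i}$ and comparing with $\mathbf x^2$ (largest terms $x_mx_k\ne x_k^2$) does give $T_{2k-1}\ge2$ whenever some expansion is collision-free. But the lemma is not proved, because everything hinges on the ``rigid'' case, and there you only offer two unexecuted sketches. Worse, route (i) is unsound as stated: take your own example $\mathbf x=(2,3,6)$, a legitimate solution in $S(2,3)$. The denominators of $\mathbf x^N$ are $2\cdot6^j,\,3\cdot6^j$ $(0\le j\le N-1)$ and $6^N$; expanding $x_i\cdot 6^j$ with $j\le N-2$ collides with the old term $x_i\cdot 6^{j+1}$, expanding $2\cdot6^{N-1}$ or $3\cdot6^{N-1}$ produces $6^N$, which is already present, and expanding the top term merely reproduces $\mathbf x^{N+1}$. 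So \emph{every} expansion of $\mathbf x^N$ is blocked for \emph{every} $N$, the collisions are not confined to the topmost terms, and the counting heuristic fails. Route (ii) points in the right direction but stops exactly where the work begins: to apply Lemma \ref{lemd} you must control the largest denominator of the manufactured second solution, and neither Theorem \ref{thmc} nor Graham's theorem behind it hands you that; you acknowledge this is ``the real work'' and do not do it.

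The paper sidesteps this difficulty entirely by splitting the \emph{top} term itself, with an explicitly known largest denominator. Since $x_k$ is divisible by two distinct primes $p,q$ among the $p_i$ (your valuation argument for this is correct and is also the paper's observation), Lemma \ref{lemb} gives $K$ such that $Y_K=\{p^\alpha q^\beta:\ 0\le\beta\le K\}$ is complete; choose $v>K$ with $q^v=\sum_{i=1}^{t'}p^{\alpha_i}q^{\beta_i}$ a sum of distinct elements of $Y_K$, which forces $t'\ge2$ and $\beta_i<v$. Put $u=\max\{v,\alpha_1,\dots,\alpha_{t'}\}$ and divide the identity by $x_k^uq^v$: the last entry $x_k^u$ of $\mathbf x^u$ is replaced by the $t'$ distinct larger denominators $x_k^uq^{v-\beta_i}p^{-\alpha_i}\in S(p_1,\dots,p_t)$, yielding a solution of length $u(k-1)+t'$ whose largest term is $x_k^uq^{v-\beta_1}p^{-\alpha_1}$. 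Feeding this and $\mathbf x^u$ into Lemma \ref{lemd} reduces to checking $x_k^{u(t'-1)}p^{u(k-1)\alpha_1}\ne q^{u(k-1)(v-\beta_1)}$, which is immediate since $p$ divides the left side but not the right. No quantitative information has to be extracted from Theorem \ref{thmc}, and no collision analysis is needed. As it stands, your proposal establishes the lemma only for solutions admitting a collision-free single-term expansion, not in general.
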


\begin{proof} Let $(x_1,\ldots , x_k)\in \mathcal{T}_{k} (p_1, p_2,\ldots
, p_t)$. It is clear that $x_k$ is not a prime power. Therefore, there
exist two distinct primes $p,q\in \{ p_1, p_2,\ldots , p_t \} $
with $pq|x_k$. Let $K$ be as in Lemma \ref{lemb}. Take a large
$v>K$ such that $q^v$ is the sum of distinct terms taken from
$Y_K$.  Assume that
$$q^v=\sum_{i=1}^t p^{\alpha_i}q^{\beta_i}, \quad
p^{\alpha_1}q^{\beta_1}<\cdots < p^{\alpha_t}q^{\beta_t} ,$$ where
$\alpha_i, \beta_i\in \mathbf{N}_0$ and $0\le \beta_i\le K$. Since
$v>K$, we have $t\ge 2$ and $v>\beta_i (1\le i\le t)$. Let $u=\max
\{ v, \alpha_1, \ldots , \alpha_t\} $. Write
$$(x_1,\ldots , x_k)^u =(y_1, \ldots , y_{u(k-1)+1}).$$
Then $y_{u(k-1)+1}=x_k^u$. It is clear that
$$(y_1, \ldots , y_{u(k-1)},
y_{u(k-1)+1}q^{v-\beta_t}p^{-\alpha_t},\ldots ,
y_{u(k-1)+1}q^{v-\beta_1}p^{-\alpha_1})\in \mathcal{T}_{u(k-1)+t}
(p_1, p_2,\ldots , p_t).$$ In order to prove Lemma \ref{lemc}, by
Lemma \ref{lemd} it is enough to prove that
$$y_{u(k-1)+1}^{u(k-1)+t-1}\not=\Big(
y_{u(k-1)+1}q^{v-\beta_1}p^{-\alpha_1}\Big)^{u(k-1)},$$ or equivalently
$$y_{u(k-1)+1}^{t-1}p^{u(k-1)\alpha_1}\not=
q^{u(k-1)(v-\beta_1)}.$$ This follows from $t\ge 2$,
$u(k-1)\alpha_1\ge 0$ and
$$pq\mid y_{u(k-1)+1}^{t-1}.$$
 This completes the proof of Lemma \ref{lemc}.

\end{proof}

\begin{proof}[Proof of Theorem \ref{thma}]  If $K(p_1, p_2,\ldots , p_t)$
is empty, then
Theorem \ref{thma} is true trivially.  Now we assume that $K(p_1,
p_2,\ldots , p_t)$ is not empty.

We first proof part (a).
By Lemma \ref{lemc} there exists
an $m_0$ with $T_{m_0} (p_1, p_2,\ldots , p_t)\ge 2$. For each
integer $0\le i <m_0-1$,  let $k_i$ (if it exists) be the least
positive integer $k$ such that $k\equiv i\pmod{m_0-1} $ and $k\in
K(p_1, p_2,\ldots , p_t)$. By Lemma \ref{lema} we have
\begin{equation}\label{eqnab} T_{(m_0-1)l+k_i}(p_1, p_2,\ldots , p_t)\ge (T_{m_0}(p_1,
p_2,\ldots , p_t))^l T_{k_i}(p_1, p_2,\ldots , p_t)\ge
2^l.\end{equation} Hence
\begin{equation}\label{new}K(p_1, p_2,\ldots , p_t)=\bigcup_{i=0,\, k_i
\text{exists}}^{m_0-2} \{ (m_0-1)l+k_i : l=1, 2, \ldots \},
\end{equation}
which proves part (a).

We now prove the lower bound of
part (b) let $c_1=\min 2^{1/(m_0-1+k_i)}$ and $t_0=\max
k_i$, where the minimum and maximum are taken over all $i$ such
that $k_i$ exists.  If $k\in K(p_1, p_2,\ldots , p_t)$ and
$k>t_0$, then, by \eqref{new},  there exists an $i$ with $0\le i
<m_0-1$ and a positive integer $l$  such that $k=(m_0-1)l+k_i$.
  By (\ref{eqnab}) we have
$$T_{k}(p_1, p_2,\ldots , p_t)=T_{(m_0-1)l+k_i}(p_1, p_2,\ldots , p_t)
\ge 2^l\ge 2^{((m_0-1)l+k_i)/(m_0-1+k_i)} \ge c_1^k.$$

To prove the upper bound
let
 $$\sum_{i=1}^{k}\frac 1{x_i}=1, \quad 1<x_1<x_2<\cdots <x_{k},
\quad x_i\in S(p_1, p_2,\ldots , p_t)\ (1\le i\le k).$$

Let the sequence $u_n$ be defined in the following way: $u_1=1,
u_{n+1}=u_n(u_n+1)$ for $n\ge 1$. Then $u_n<2^{2^n}$ for $n\ge 1$.
As in the proof of \cite[p.~218]{Sandor} we have
$$x_j\le (k-j+1)u_j<k 2^{2^j},\quad j=1,2,\ldots , k.$$
Let
$$x_j=p_1^{\alpha_{j1}}\cdots p_t^{\alpha_{jt}}.$$
Then $\alpha_{ji}\le 2\log k+2^j$. Thus
$$T_k(p_1,\ldots , p_t)\le \prod_{2^j\le 2\log k} (4\log k)^t
\prod_{j\le k, 2^j>2\log k} (2^{j+1})^t =\sqrt 2^{tk^2(1+o(1))}.$$
This completes the proof of Theorem \ref{thma}.
\end{proof}

\section{Proofs of Theorems \ref{thmb}, \ref{thmc} and Corollary \ref{cor1}}

In order to prove Theorem \ref{thmc}, we need a well known result
of Graham.

For a sequence $S=(s_1, s_2, \ldots ) $ of positive integers,
$M(S)$ is defined to be the monotonically increasing sequence formed
from the set of all products $\prod\limits_{i=1}^m s_{k_i}$, where
$m=1, 2,\ldots $ and $k_1<k_2<\cdots <k_m$. Thus all the terms of
$M(S)$ are distinct.

For a sequence of real numbers, a real number $\alpha $ is said to
be $S$-\emph{accessible} if, for any $\varepsilon >0$, there
exists $\beta \in P(S)$ such that $0\le \beta -\alpha <\varepsilon
$.

$S$ is said to be \emph{complete} if all sufficiently large
integers belong to $P(S)$.\\

{\bf Theorem A (\cite[Theorem 5]{Graham})} {\it Let $S=(s_1, s_2,
\ldots ) $ be a sequence of positive integers such that

(1) $M(S)$ is complete,

(2) $s_{n+1}/s_n$ is bounded.

Then
$$\frac pq\in P((M(S))^{-1})$$
(where $(p, q)=1$) if and only if

(3) $p/q$ is $(M(S))^{-1}$-accessible,

(4) $q$ divides some term of $M(S)$.}

With these preparation, we can prove our Theorem \ref{thmc}.\\

\begin{proof}[Proof of Theorem \ref{thmc}]
By (b) we have $M(A)=A$. By (a) and $M(A)=A$ we know that Theorem
A (1) is true. By (b) we have $a_2a_n\in A$.   As $a_2>a_1\ge 1$
we have $a_2a_n>a_n$. Thus $a_{n+1}\le a_2a_n$. So $a_{n+1}/a_n\le
a_2$. Hence Theorem A (2) is true.

If $p/q\in P(A^{-1})$, where $(p, q)=1$, then (d) is true and by
Theorem A, we have that $q$  divides some term of $A$, i.e. (e) is true.

Now we assume that (d) and (e) are true. From (e) we know that
Theorem A (4) holds.  In order to prove that $p/q\in P(A^{-1})$,
by Theorem A, it is enough to prove that Theorem A (3) holds,
i.e., $p/q$ is $A^{-1}$-accessible.

Suppose that $p/q\notin P(A^{-1})$ (this avoids equality
 in the following arguments). We will show that $p/q$ is
$A^{-1}$-accessible. Then by Theorem A we have $p/q\in P(A^{-1})$,
a contradiction.

If
$$\sum_{i=1}^\infty \frac 1{a_i}=+\infty,$$
let $a_0=0$. If
$$\sum_{i=1}^\infty \frac 1{a_i}<+\infty,$$
let $a_0$ be the real number defined by
$$\frac 1{a_0}=\sum_{i=1}^\infty \frac 1{a_i}.$$

Let $i_1$ be the integer $i$ such that
\begin{equation}\label{eqnu}\frac 1{a_i}<\frac pq<\frac 1{a_{i-1}}.\end{equation} By (d) we
have $i_1\ge 1$. By (c) and (d) we have
\begin{equation}\label{eqnv}\sum_{i=i_1}^\infty \frac 1{a_i}\ge \frac 1{a_{i_1-1}}>\frac
pq.\end{equation} By (\ref{eqnu}) and (\ref{eqnv}) we have
$$0<\frac pq-\frac 1{a_{i_1}}<\frac 1{a_{i_1-1}}-\frac 1{a_{i_1}}\le \sum_{i=i_1+1}^\infty \frac 1{a_i}.$$
Suppose that we have found a sequence $\{ i_k\}_{k=1}^n$ such that
$1\le i_1<i_2<\cdots <i_n $ and
$$0<\frac pq-\sum_{l=1}^k\frac 1{a_{i_l}}< \sum_{i=i_k+1}^\infty \frac 1{a_i}, \quad k=1,2,\ldots, n .$$
If
$$\frac 1{a_{i_n+1}}<\frac pq-\sum_{l=1}^n\frac 1{a_{i_l}},$$
let $i_{n+1}=i_n+1$, then
$$0<\frac pq-\sum_{l=1}^{n+1}\frac 1{a_{i_l}}< \sum_{i=i_{n+1}+1}^\infty \frac 1{a_i}.$$
If
$$\frac pq-\sum_{l=1}^n\frac 1{a_{i_l}}<\frac 1{a_{i_n+1}},$$
let $i_{n+1}$ be the integer $i$ with
$$\frac 1{a_i}<\frac pq-\sum_{l=1}^n\frac 1{a_{i_l}}<\frac
1{a_{i-1}},$$ then $i_{n+1}>i_n+1$ and
$$0<\frac pq-\sum_{l=1}^{n+1}\frac 1{a_{i_l}}<\frac
1{a_{i_{n+1}-1}}- \frac 1{a_{i_{n+1}}} \le \sum_{i=i_{n+1}+1}^\infty
\frac 1{a_i}.$$ Thus we can find a sequence $\{ i_k\}_{k=1}^\infty$
such that $1\le i_1<i_2<\cdots  $ and
$$0<\frac pq-\sum_{l=1}^k\frac 1{a_{i_l}}< \sum_{i=i_k+1}^\infty \frac 1{a_i}, \quad k=1,2,\ldots .$$
Let $j_k$ be the least $j$ with $j\ge i_k+1$ such that
$$0<\frac pq-\sum_{l=1}^k\frac 1{a_{i_l}}< \sum_{i=i_k+1}^j \frac 1{a_i}.$$
Then
$$0<\sum_{i=i_k+1}^{j_k} \frac 1{a_i}-\left( \frac pq-\sum_{l=1}^k\frac 1{a_{i_l}}
\right) < \frac 1{a_{j_k}}.$$ That is
$$0<\sum_{l=1}^k\frac 1{a_{i_l}}+\sum_{i=i_k+1}^{j_k}\frac 1{a_i}-\frac pq<\frac 1{a_{j_k}}.$$
Since $a_{j_k}\to\infty$, we have $p/q$ is $A^{-1}$-accessible.
This completes the proof of Theorem \ref{thmc}.

\end{proof}

\begin{proof}[Proof of Corollary \ref{cor1}] By Theorem \ref{thmc}
it is enough to prove that
$$\sum_{j=i+1}^\infty \frac 1{a_j} \ge \frac 1{a_i}  \quad
\text{ for all } i\ge 1.$$ Since
$$a_i<a_ia_1<a_ia_2<\cdots ,$$
we have
$$\sum_{j=i+1}^\infty \frac 1{a_j} \ge \sum_{j=1}^\infty \frac
1{a_ia_j}>\frac 1{a_i}.$$ This completes the proof of Corollary
\ref{cor1}.

\end{proof}

\begin{proof}[Proof of Theorem \ref{thmb}] Let
$$A=S(p_1,\ldots , p_t)\setminus \{ 1\} =\{ a_1<a_2<\cdots \} .$$
The necessity of the condition was explained as motivation just
before the statement of Theorem \ref{thmb}. We only need to prove
the sufficiency. Assume that
$$\frac{p_1}{p_1-1}\cdots \frac{p_t}{p_t-1}>2.$$
Then $t\ge 2$ and
\begin{equation}\label{eqnw}\sum_{i=1}^\infty \frac 1{a_i}>1.\end{equation}
Since $t\ge 2$, by Lemma \ref{lemb}, $A$ is complete. It is clear
that (b) in Corollary \ref{cor1} is true.  By Corollary \ref{cor1}
we have $1\in P(A^{-1})$. This completes the proof of Theorem
\ref{thmb}.
\end{proof}

\section{Proof of Theorem \ref{thm1}}

Let $A_k(M)$ denote the set of solutions of $\sum_{i=1}^k\frac
1{x_i}=1$ in distinct integers $1<x_1<x_2<\cdots <x_k$ with
$M|x_k$ and $x_i\in\{ 2^\alpha 3^\beta 5^\gamma \}\ (1\le i\le
k)$. Let $B_k(M)$ denote the set of solutions of
$\sum_{i=1}^k\frac 1{x_i}=1$ in distinct integers
$1<x_1<x_2<\cdots <x_k$ with $M|x_k$ and $x_i\in\{ 3^\alpha
5^\beta 7^\gamma \}\ (1\le i\le k)$. It is clear that $T_k(2, 3,
5)\ge |A_k(M)|$ for any $M\in \{ 2^\alpha 3^\beta 5^\gamma \}$ and
$T_k(3, 5, 7)\ge |B_k(M)|$ for any $M\in \{ 3^\alpha 5^\beta
7^\gamma \}$. In order to obtain  good lower bounds of $T_k(2, 3,
5)$ and $T_k(3, 5, 7)$, we choose two suitable constants $M_1$ and $M_2$
such that $|A_k(M_1)|$ and $|B_k(M_2)|$ have good lower bounds to start with.
 We will establish recursive relations between $|A_{k+2}(M)|$ and
$|A_k(M)|$, and $|B_{k+2}(M)|$ and $|B_k(M)|$ which inductively proves the
desired result. Observe that $B_{2k}(M)=\emptyset $).

We start with the following lemma.

\begin{lemma}\label{lem2} Let $m_i$, $a_i,b_i,c_i,d_i$ be  nonzero integers
with $0<a_i<b_i<c_i$, $a_i+b_i+c_i=d_i$ and $(a_i,b_i,c_i)=1$
$(i=1,2)$. If
$$\left\{
\frac{ d_1m_1}{a_1},\frac{d_1m_1}{b_1},\frac{d_1m_1}{c_1}\right\}
=\left\{ \frac{
d_2m_2}{a_2},\frac{d_2m_2}{b_2},\frac{d_2m_2}{c_2}\right\} ,$$
then $a_1=a_2$, $b_1=b_2$, $c_1=c_2$ and $m_1=m_2$.
\end{lemma}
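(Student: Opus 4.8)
The plan is to exploit the ordering $0<a_i<b_i<c_i$ to match up the two triples term by term. Since $a_i+b_i+c_i=d_i$, the three numbers $d_im_i/a_i$, $d_im_i/b_i$, $d_im_i/c_i$ satisfy $d_im_i/a_i>d_im_i/b_i>d_im_i/c_i$; that is, each set is listed in strictly decreasing order by the displayed fractions. Hence the hypothesis that the two sets coincide forces
$$\frac{d_1m_1}{a_1}=\frac{d_2m_2}{a_2},\qquad \frac{d_1m_1}{b_1}=\frac{d_2m_2}{b_2},\qquad \frac{d_1m_1}{c_1}=\frac{d_2m_2}{c_2}.$$
Writing $N=d_1m_1$ for the common numerator structure, I would set $\lambda=d_1m_1/(d_2m_2)$ (a positive rational), so that $a_2=\lambda a_1$, $b_2=\lambda b_1$, $c_2=\lambda c_1$, and therefore $d_2=a_2+b_2+c_2=\lambda(a_1+b_1+c_1)=\lambda d_1$.

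The key step is then to show $\lambda=1$. Write $\lambda=u/v$ in lowest terms with $u,v$ positive integers, $(u,v)=1$. From $a_2=(u/v)a_1$ we get $v\mid a_1$, and similarly $v\mid b_1$ and $v\mid c_1$; since $(a_1,b_1,c_1)=1$ this yields $v=1$, so $\lambda=u$ is a positive integer and $a_2=u a_1$, $b_2=u b_1$, $c_2=u c_1$. By the symmetric argument applied to $\lambda^{-1}=1/u$ (using $(a_2,b_2,c_2)=1$), we likewise get $u\mid 1$, hence $u=1$ and $\lambda=1$. Therefore $a_1=a_2$, $b_1=b_2$, $c_1=c_2$, and $d_1=a_1+b_1+c_1=a_2+b_2+c_2=d_2$. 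Finally, from $d_1m_1/a_1=d_2m_2/a_2$ together with $d_1=d_2$ and $a_1=a_2$ we conclude $m_1=m_2$, completing the proof.

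The only mild subtlety — the step I would treat most carefully — is the reduction $\lambda=1$: one must use the coprimality hypothesis $(a_i,b_i,c_i)=1$ in both directions rather than just once, since a priori $\lambda$ could be a non-integer rational, and it is precisely the pair of coprimality conditions (one for each triple) that rules this out. Everything else is a routine matching of ordered triples and back-substitution.
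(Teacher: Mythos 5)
Your proof is correct and takes essentially the same route as the paper's: use the ordering $a_i<b_i<c_i$ to match the three fractions term by term, then invoke both coprimality hypotheses $(a_1,b_1,c_1)=(a_2,b_2,c_2)=1$ to force the scaling ratio between $d_1m_1$ and $d_2m_2$ to be $1$ (the paper phrases this via the gcd identity $(a_2d_1m_1,b_2d_1m_1,c_2d_1m_1)=(a_1d_2m_2,b_1d_2m_2,c_1d_2m_2)$, you via writing the ratio in lowest terms), and finally back-substitute to get $d_1=d_2$ and $m_1=m_2$. One cosmetic slip: with $\lambda=d_1m_1/(d_2m_2)$ the matching actually gives $a_2=a_1/\lambda$ rather than $a_2=\lambda a_1$, but since your two-sided divisibility argument shows $\lambda=1$ symmetrically, the conclusion is unaffected.
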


 \begin{proof} Since $0<a_i<b_i<c_i\ (i=1,2)$ and
$$\left\{
\frac{ d_1m_1}{a_1},\frac{d_1m_1}{b_1},\frac{d_1m_1}{c_1}\right\}
=\left\{
\frac{d_2m_2}{a_2},\frac{d_2m_2}{b_2},\frac{d_2m_2}{c_2}\right\}
,$$ we have
$$\frac{ d_1m_1}{a_1}=\frac{
d_2m_2}{a_2},\quad\frac{d_1m_1}{b_1}=\frac{d_2m_2}{b_2},\quad
\frac{d_1m_1}{c_1}=\frac{d_2m_2}{c_2}.$$ Thus
$$a_2d_1m_1=a_1d_2m_2,\quad b_2d_1m_1=b_1d_2m_2,\quad
c_2d_1m_1=c_1d_2m_2.\eqno{(1)}$$ Hence
$$(a_2d_1m_1,b_2d_1m_1,c_2d_1m_1)=(a_1d_2m_2,b_1d_2m_2,c_1d_2m_2).$$
Since $(a_i,b_i,c_i)=1\ (i=1,2)$, we have $d_1m_1=d_2m_2$. By (1)
we have $a_1=a_2$, $b_1=b_2$ and $c_1=c_2$. Thus $d_1=d_2$. By
$d_1m_1=d_2m_2$ we have $m_1=m_2$. This completes the proof of
Lemma \ref{lem2}.\end{proof}

The following two lemmas establish the recursive relations
between $|A_{k+2}(M)|$ and $|A_k(M)|$, and
$|B_{k+2}(M)|$ and $|B_k(M)|$.\\

\begin{lemma}\label{lem3} Let $M_2=3^{20}\times 5^{20}\times 7^{20}$. Then
$$|B_{k+2}(M_2)|\ge 62|B_k(M_2)|.$$
\end{lemma}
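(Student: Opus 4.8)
The plan is to imitate the splitting trick already used in Lemma \ref{T_o2} and in the proof of Theorem \ref{thmto1}, but now inside the semigroup $S(3,5,7)$ and keeping track of divisibility by $M_2 = 3^{20}\cdot 5^{20}\cdot 7^{20}$. Starting from a solution $(x_1,\dots,x_k)\in B_k(M_2)$, so that $M_2\mid x_k$, I would replace the last term $1/x_k$ by a three-term decomposition $1/x_k = 1/u + 1/v + 1/w$ with $x_k < u < v < w$, all of $u,v,w$ in $S(3,5,7)$, and with $M_2 \mid w$. Concretely, write $1/x_k$ as $(1/x_k)\cdot(1/d)\cdot(a+b+c)$ where $a+b+c=d$, $0<a<b<c$, $(a,b,c)=1$, and $a,b,c,d$ are chosen so that $x_k d/a, x_k d/b, x_k d/c$ are all integers composed only of the primes $3,5,7$ — that is, $d$ is a $\{3,5,7\}$-number and $a,b,c$ each divide $x_k d$ with the right prime support. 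Then the new tuple $(x_1,\dots,x_{k-1}, x_k d/c, x_k d/b, x_k d/a)$ lies in $B_{k+2}(M_2)$ provided the largest new denominator $x_k d/a$ is divisible by $M_2$; since $M_2\mid x_k$ and $a$ is a $\{3,5,7\}$-number of bounded size, one arranges this by taking $M_2$ large enough (that is exactly why the exponent $20$ appears) so that $M_2 a \mid x_k d$. The number of admissible triples $(a,b,c)$ available is the source of the constant; I would exhibit at least $62$ such triples (with the $d$'s being $\{3,5,7\}$-smooth), for instance scaling the basic identity $1 = 1/2+1/3+1/6$ type relations into the three-prime setting and enumerating the valid $(a,b,c,d)$ with $d\in S(3,5,7)$ up to a fixed bound.

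The second ingredient is injectivity: the map sending $(x_1,\dots,x_k)\times\{\text{choice of }(a,b,c)\}$ to the enlarged tuple in $B_{k+2}(M_2)$ must be one-to-one. This is precisely what Lemma \ref{lem2} is set up to deliver: from the enlarged tuple one recovers the multiset $\{x_k d/a, x_k d/b, x_k d/c\}$ as the three largest entries (one checks these exceed $x_{k-1}$, which holds because $M_2\mid x_k$ forces $x_k$ to be huge), and Lemma \ref{lem2} then shows that this multiset determines $a,b,c$ and $m:=x_k$ uniquely. Hence distinct $(x_1,\dots,x_k)$ or distinct triples $(a,b,c)$ give distinct elements of $B_{k+2}(M_2)$, so $|B_{k+2}(M_2)|\ge 62\,|B_k(M_2)|$.

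The main obstacle is the combinatorial bookkeeping in the first step: one must actually produce a list of at least $62$ quadruples $(a,b,c,d)$ with $0<a<b<c$, $a+b+c=d$, $\gcd(a,b,c)=1$, $d\in S(3,5,7)$, and such that each of $a,b,c$ divides $d$ times a suitable $\{3,5,7\}$-number (so that all three of $x_k d/a, x_k d/b, x_k d/c$ stay inside $S(3,5,7)$), while also verifying the size/ordering conditions $x_k < x_k d/c$ and $x_{k-1} < x_k d/c$ and checking $M_2 a \mid x_k d$ uniformly. Once the exponent in $M_2$ is fixed at $20$, the divisibility constraints become automatic, so the real work is the finite search for the $62$ triples and confirming the inequalities; everything else (injectivity via Lemma \ref{lem2}, membership in $B_{k+2}(M_2)$) is routine. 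An entirely parallel argument with a different constant will handle Lemma \ref{lem3}'s companion statement for $|A_{k+2}(M_1)|$ in the $S(2,3,5)$ case, yielding the constant $368$.
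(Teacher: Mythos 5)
Your construction is the same as the paper's: replace the largest denominator via $\frac{1}{x_k}=\frac{1}{dx_k/c}+\frac{1}{dx_k/b}+\frac{1}{dx_k/a}$ for quadruples $(a,b,c,d)$ with $a+b+c=d$, $a<b<c$, $(a,b,c)=1$ produced by a finite computer search, and deduce injectivity from Lemma \ref{lem2} applied to the three largest entries. There is, however, one genuine flaw in your divisibility bookkeeping. You claim that $M_2a\mid x_kd$ (equivalently $M_2\mid dx_k/a$) can be ``arranged by taking $M_2$ large enough'' and that, once the exponent $20$ is fixed, ``the divisibility constraints become automatic.'' This is false: all you know is $M_2\mid x_k$, so if for some prime $p\in\{3,5,7\}$ the exponent of $p$ in $x_k$ equals $20$ exactly, then $M_2\mid dx_k/a$ forces the exponent of $p$ in $a$ to be at most that in $d$; enlarging the exponent in $M_2$ raises both sides of the requirement and fixes nothing (take $x_k=M_2$, $a=9$ and $3\nmid d$: the largest new denominator is never divisible by $M_2$, whatever exponent you choose). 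The condition that actually does the work --- and that the paper imposes in its Mathematica search --- is $a\mid d$: then $dx_k/a=(d/a)x_k$ is a multiple of $x_k$, hence of $M_2$, so the enlarged tuple really lies in $B_{k+2}(M_2)$ and the lemma can be iterated. The exponent $20$ plays a different role from the one you assign to it: since the searched $a,b,c$ have exponents at most $14$ at $3$ and at most $8$ at $5$ and $7$, it guarantees $b\mid x_k$ and $c\mid x_k$, so the other two new denominators are integers lying in $S(3,5,7)$. Your phrase ``checking $M_2a\mid x_kd$ uniformly'' would, if read as a condition on the quadruple valid for every admissible $x_k$, amount exactly to $a\mid d$, but your subsequent assertion that it is automatic shows the point was missed.

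A smaller slip: the reason the three new entries are the largest in the enlarged tuple is simply that $dx_k/c>x_k>x_{k-1}$ because $d=a+b+c>c$, not that ``$M_2\mid x_k$ forces $x_k$ to be huge''; this is harmless since you also state the needed inequality $x_k<dx_k/c$. With $a\mid d$ added to the constraints on the quadruples (and the count $62$ referring to quadruples satisfying it), your argument coincides with the paper's proof; without it, the image tuples need not belong to $B_{k+2}(M_2)$ and the constant $62$ is not justified.
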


\begin{proof} If $|B_k(M_2)|=0$, then the conclusion is clear.  Now
we assume that $|B_k(M_2)|>0$. By Lemma \ref{lem2} we only need to
find 62 four-tuples $(a, b, c, d)$ to each $(x_1, \ldots , x_k)\in
B_k(M_1)$ with $a,b,c,d\in \{ 3^\alpha 5^\beta 7^\gamma \}$,
$a+b+c=d$, $a<b<c$, $(a,b,c)=1$,$a|dx_k$, $b|dx_k$,$c|dx_k$ and
$M_1|\frac{dx_k}{a}$. The reason is that
$$\frac 1{x_k}=\frac 1{dx_k/c}+\frac 1{dx_k/b}+\frac 1{dx_k/a}$$
and
$$x_k<dx_k/c<dx_k/b<dx_k/a.$$
By a simple computer program with Mathematica we find that there
are 62 four-tuples  $(a, b, c, d)$ with $a,b,c,d\in \{ 3^\alpha
5^\beta 7^\gamma : 0\le \alpha\le 14,
 0\le \beta, \gamma\le 8\}$, $a+b+c=d$, $a<b<c$, $(a,b,c)=1$ and
$a|d$. Since $M_2=3^{20}\times 5^{20}\times 7^{20}$ and $M_2|x_k$,
Lemma \ref{lem3}  follows immediately.\end{proof}

\begin{lemma}\label{lem4} Let $M_1=2^{20}\times 3^{20}\times 5^{20}$. Then
$$|A_{k+2}(M_1)|\ge 368|A_k(M_1)|.$$
\end{lemma}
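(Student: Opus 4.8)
The plan is to mirror exactly the proof of Lemma \ref{lem3}, which handles the triple $(2,3,5)$-analogue, replacing the prime set $\{3,5,7\}$ by $\{2,3,5\}$ and the constant $62$ by $368$. If $|A_k(M_1)|=0$ the inequality is trivial, so assume $|A_k(M_1)|>0$ and fix any solution $(x_1,\dots,x_k)\in A_k(M_1)$. The key identity is that for any quadruple $(a,b,c,d)$ of elements of $S(2,3,5)$ with $0<a<b<c$, $a+b+c=d$, $(a,b,c)=1$, and $a\mid d$, we may split off the last term via
$$\frac{1}{x_k}=\frac{1}{dx_k/c}+\frac{1}{dx_k/b}+\frac{1}{dx_k/a},\qquad x_k<\frac{dx_k}{c}<\frac{dx_k}{b}<\frac{dx_k}{a},$$
where every new denominator lies in $S(2,3,5)$, and $M_1\mid x_k$ together with $a\mid d$ guarantees $M_1\mid dx_k/a$, so the replacement still lies in $A_{k+2}(M_1)$. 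Distinctness among the three new denominators is immediate from $a<b<c$; distinctness of each from $x_1,\dots,x_{k-1}$ follows because all three exceed $x_k$.

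Next I would invoke Lemma \ref{lem2} with $m_i=x_k$ (for $i=1,2$): it shows that distinct admissible quadruples $(a,b,c,d)$ produce distinct solutions in $A_{k+2}(M_1)$, and moreover that solutions coming from different base solutions $(x_1,\dots,x_k)$ are themselves distinct (the largest term of the new solution determines $x_k$ and the quadruple). Hence $|A_{k+2}(M_1)|\ge N\cdot|A_k(M_1)|$, where $N$ is the number of quadruples $(a,b,c,d)$ with $a,b,c,d\in S(2,3,5)$ lying in a suitably bounded box, $a+b+c=d$, $a<b<c$, $(a,b,c)=1$, and $a\mid d$. It therefore suffices to exhibit $N\ge 368$ such quadruples; one runs the same short Mathematica search as in Lemma \ref{lem3}, over exponents bounded by roughly $20$ (so that the box constraint is subsumed by $M_1=2^{20}3^{20}5^{20}$ and the relation $M_1\mid dx_k/a$), and verifies the count is $368$.

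The only genuinely delicate point — and the one I would state carefully rather than wave at — is the bookkeeping ensuring the quadruples actually fit: one needs the exponents appearing in $a,b,c,d$ to be at most $20$ so that $M_1\mid x_k$ forces $M_1\mid dx_k/a$, and this is exactly why the search is restricted to, say, $0\le\alpha,\beta,\gamma\le 14$ as in Lemma \ref{lem3}. Everything else is routine: the algebraic splitting identity, the distinctness argument via Lemma \ref{lem2}, and the finite verification of the count $368$ by computer. I would close by remarking, as the authors surely do, that combining this recursion with a single explicit base solution in $A_{k_0}(M_1)$ for small $k_0$ yields the bound $T_k(2,3,5)\ge|A_k(M_1)|\ge c_2\sqrt{368}^{\,k}$ of Theorem \ref{thm1}(b).
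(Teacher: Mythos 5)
Your proposal is correct and follows essentially the same route as the paper: the paper's own proof of this lemma simply says ``similarly to the proof of Lemma \ref{lem3}'' after citing the Mathematica count of 368 quadruples $(a,b,c,d)$ with $a,b,c,d\in\{2^{\alpha}3^{\beta}5^{\gamma}:0\le\alpha\le 15,\ 0\le\beta\le 10,\ 0\le\gamma\le 8\}$, $a+b+c=d$, $a<b<c$, $(a,b,c)=1$, $a\mid d$, and your splitting identity, the divisibility bookkeeping via $M_1\mid x_k$ with exponents below $20$, and the injectivity argument through Lemma \ref{lem2} (with $m_i=x_k$) are exactly the paper's argument.
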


\begin{proof}
By a simple computer program with Mathematica we find that there
are 368 four-tuples  $(a, b, c, d)$ with $a,b,c,d\in \{ 2^\alpha
3^\beta 5^\gamma : 0\le \alpha\le 15,
 0\le \beta\le 10, 0\le \gamma\le 8\}$, $a+b+c=d$, $a<b<c$, $(a,b,c)=1$ and
$a|d$. Similarly to the proof of  Lemma \ref{lem3}, Lemma
\ref{lem4} follows immediately.\end{proof}

Eventually, we come to the proof of Theorem 1.

\begin{proof}[\bf Proof of Theorem 1.] (a) By Sierpi\'nski
\cite{Sierpinski} (see also \cite{Burshtein1}) there exist $11$
odd numbers $1<x_1<x_2<\cdots <x_{11}$ with $x_{11}=945$,
$x_i\in\{ 3^\alpha 5^\beta 7^\gamma \}$ and
$$\sum_{i=1}^{11}\frac 1{x_i}=1,$$
(see the introduction.)
Since
$$\frac 1{x_{11}}=\frac 1{105x_{11}/(3^2\times 7)}+\frac 1{105x_{11}/3^3}+\frac
1{105x_{11}/3^2}+\frac 1{105x_{11}/5}+\frac 1{105x_{11}},$$ there
exist $15$ odd numbers $1<y_1<y_2<\cdots <y_{15}$ with
$y_{15}=105x_{11}$,$y_i\in\{ 3^\alpha 5^\beta 7^\gamma \}$ and
$$\sum_{i=1}^{15}\frac 1{y_i}=1.$$
Continuing this procedure,  there exists an odd number $k_0$ such
that $|B_{k_0}(M_2)|\ge 1$. By Lemma \ref{lem3} we have
$$|B_k(M_2)|\ge \sqrt{62}^{k-k_0}|B_{k_0}(M_2)|\ge
\sqrt{62}^{k-k_0},\quad k\ge k_0, 2\nmid k.$$ So
$$T_k(3,5,7)\ge \sqrt{62}^{k-k_0},\quad k\ge k_0, 2\nmid k.$$
Since
$$\frac 1{x_{11}}=\frac 1{5x_{11}/3}+\frac 1{3x_{11}}+\frac
1{15x_{11}},$$  there exist $13$ odd numbers $1<z_1<z_2<\cdots
<z_{13}$ with $z_{13}=15x_{11}$,$z_i\in\{ 3^\alpha 5^\beta
7^\gamma \}$ and
$$\sum_{i=1}^{13}\frac 1{z_i}=1.$$
Continuing this procedure, we have $T_k(3,5,7)\ge 1$ for all odd
numbers $k\ge 11$.  Hence there exists a positive constant $c_1$
such that $T_k(3,5,7)\ge c_1\sqrt{62}^k$ for all odd numbers $k\ge
11$.

(b) By $$\frac 12 +\frac 1{4}+\frac 1{8}+\frac 1{16}+\frac
1{30}+\frac 1{60}+\frac 1{120}+\frac 1{240} =1, $$
$$\frac 1a=\frac 1{30a/24}+\frac 1{30a/3}+\frac 1{30a/2}+\frac
1{30a}$$ and
$$\frac 1a=\frac 1{3a/2}+\frac 1{3a},$$
there exists an integer $k_0$ such that
$$|A_{2k_0}(M_1)|\ge 1,\quad |A_{2k_0+1}(M_1)|\ge 1.$$ By Lemma
\ref{lem4} we have
$$|A_{2k}(M_1)|\ge \sqrt{368}^{2k-2k_0}|A_{2k_0}(M_1)|\ge
{368}^{k-k_0},\quad k\ge k_0$$ and
$$|A_{2k+1}(M_1)|\ge \sqrt{368}^{2k-2k_0}|A_{2k_0+1}(M_1)|\ge
{368}^{k-k_0},\quad k\ge k_0.$$ By $$\frac 12 +\frac 13+\frac
1{3^2}+\cdots +\frac 1{3^k}+\frac 1{2\times 3^k} =1, $$ we have
$T_k(2,3,5)\ge 1$ for all $k\ge 3$. So there exists a positive
constant $c_2$ such that
 $T_k(2,3,5)\ge c_2\sqrt{368}^k$ for all integers $k\ge
3$.

This completes the proof of Theorem \ref{thm1}.\end{proof}

{\bf Acknowledgements. }  We are grateful to the referee for
his/her sincerely and helpful comments.\\

\end{document}